\newtheorem{theorem}{Theorem}
\numberwithin{theorem}{section}
\newtheorem{cor}[theorem]{Corollary}
\newtheorem{lemma}[theorem]{Lemma}
\numberwithin{equation}{section}
\def\AGmL{\operatorname{A\Gamma L}}
\def\GmL{\operatorname{\Gamma L}}
\def\AGL{\operatorname{AGL}}
\def\PGmL{\operatorname{P\Gamma L}}
\def\PSL{\operatorname{PSL}}
\def\SL{\operatorname{SL}}
\def\SU{\operatorname{SU}}
\def\Sp{\operatorname{Sp}}
\def\Sym{\operatorname{Sym}}
\def\Alt{\operatorname{Alt}}
\def\Aut{\operatorname{Aut}}
\def\Out{\operatorname{Out}}
\def\GL{\operatorname{GL}}
\def\ov{\overline}
\def\rk{\operatorname{rk}}
\def\32{\frac{3}{2}}
\def\12{\frac{1}{2}}
\def\2{{(2)}}
\title[Generating sets for primitive $\32$-transitive groups]{The minimal size of a generating set for primitive $\32$-transitive groups}
\author[Churikov]{Dmitry V. Churikov$^{1}$}
\email{churikovdv@gmail.com}
\author[Vasil'ev]{Andrey V. Vasil'ev$^{1}$}
\email{vasand@math.nsc.ru}
\author[Zvezdina]{Maria A. Zvezdina$^{2,1}$}
\address{$^1$Sobolev Institute of Mathematics, Koptyuga 4, Novosibirsk 630090, Russia}
\address{$^2$Novosibirsk State University, Pirogova 1, Novosibirsk 630090, Russia}
\email{zma@math.nsc.ru}
\begin{document}

{\let\thefootnote\relax\footnote{{D.V. Churikov and A.V. Vasil'ev were supported by Russian Scientific Foundation (project N 19-11-00039).}}}

\begin{abstract}
We refer to $d(G)$ as the minimal cardinality of a generating set of a finite group $G$, and say that $G$ is $d$-generated if $d(G)\leq d$. A transitive permutation group $G$ is called $\32$-transitive if a point stabilizer $G_\alpha$ is nontrivial and its orbits distinct from $\{\alpha\}$ are of the same size. We prove that $d(G)\leq4$ for every primitive $\32$-transitive permutation group~$G$, moreover, $G$ is $2$-generated except for the very particular solvable affine groups that we completely describe. In particular, all finite $2$-transitive and $2$-homogeneous groups are $2$-generated. We also show that every finite group whose abelian subgroups are cyclic is $2$-generated, and so is every Frobenius complement.\smallskip

{\sc Keywords:}  minimal generating set of a group, primitive permutation groups, $\32$-transitive groups, $2$-transitive groups, $2$-homogeneous groups, Frobenius complements.\smallskip

{\sc MSC: 20B05, 20B15, 20B20}
 \end{abstract}
\maketitle
\section{Introduction}\label{intro}

Let $G$ be a finite group. We refer to $d(G)$ as the minimal cardinality of a generating set of $G$ and say that $G$ is $d$-\emph{generated} if $d(G)\leq d$. Though, in the class of all finite groups, the well-known sharp upper bound on $d(G)$ is $\log |G|$ (hereinafter, all the logarithms are binary), there are important subclasses of it where $d(G)\leq d$ for some constant $d$. For example, modulo the classification of finite simple groups (CFSG), $d(G)\leq 2$ for simple groups \cite[Theorem~B]{AsGur} and $d(G)\leq 3$ for almost simple groups \cite[Theorem~1]{95VolLuc}.

In the case of finite permutation groups, there are well-known bounds on the number of generators in terms of the degree $n$ of a group $G\leq\Sym(n)$. It was noted in~\cite[Lemma~5.2]{MN} that $d(G)\leq n/2$ for an arbitrary permutation group $G$ (except $G=\Sym(3)$ where $d(G)=2$) and this bound is clearly the best possible. There is a constant $c$ such that for every transitive group $G$,
$$
d(G)\leq\frac{c\,n}{\sqrt{\log n}}.
$$
This was proved for nilpotent groups in~\cite{KN}, for solvable groups in~\cite{BKR}, and for arbitrary transitive groups in~\cite[Theorem~1]{00LMMt}. Moreover, this bound is sharp up to a multiplicative constant $c$ due to~\cite{KN}, and the best possible $c=\sqrt{3}/2$ by~\cite{Tr}.

If $G$ is a primitive group, then the upper bound is
$$
d(G)\leq\frac{c\,\log n}{\sqrt{\log\log n}}
$$
for some constant~$c$. This was established in~\cite[Corollary~1.6]{PS} for solvable groups, and in~\cite[Theorem~C]{00LMMp} for all primitive groups. The bound is also sharp due to~\cite{DK}, as it was observed in~\cite{PS} right after Corollary~1.6.

The main purpose of these short notes is to find the minimal number of generators in the case of primitive $\32$-transitive groups. Recall that a finite permutation group $G\leq\Sym(\Omega)$ is called $\12$-\emph{transitive} if all its orbits are of the same size greater than $1$ (the group of degree 1 is also considered as $\12$-transitive), and $\32$-\emph{transitive} if it is transitive and a point stabilizer $G_\alpha$ is $\12$-transitive on $\Omega\setminus\{\alpha\}$. Examples of $\32$-transitive groups are $2$-transitive groups and Frobenius groups. Since a nontrivial normal subgroup of a transitive group must be $\12$-transitive~{\cite[Theorem~10.3]{64Wiel}, normal subgroups of $2$-transitive groups also provide examples of $\32$-transitive groups. The notion of a $\32$-transitive group was introduced by Wielandt \cite[\S~10]{64Wiel}, who laid the foundations of a theory of such groups by proving that every imprimitive $\32$-transitive group must be Frobenius. Passman classified solvable groups from this class \cite{Pass67, Pass67a, Pass69}. Almost simple $\32$-transitive groups were described in~\cite{Bamb}, and the final step towards the classification of $\32$-transitive groups was done recently in~\cite{Gud,14LPS}.

\begin{theorem}\label{main}
Let $G$ be a primitive $\32$-transitive permutation group. If $d(G)>2$, then $G=VG_0$ is a solvable affine group with the zero vector stabilizer $G_0\leq\GL(V)$ which is imprimitive as a linear group, and up to permutation isomorphism one of the following holds:
\begin{enumerate}
\item $d(G)=3$ and $G_0$ is the two-dimensional group of monomial matrices with determinant equal to~$\pm1$ over any field of order $q\equiv1\pmod4;$
\item $d(G)=4$, $G=G_2$ in Table~\emph{\ref{tab:1}} of Section~\emph{\ref{s:32small}}, and $G_0$ is a four-dimensional group over the field of order $3$.
\end{enumerate}
\end{theorem}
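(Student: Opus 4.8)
The plan is to run through the classification of primitive $\32$-transitive groups~\cite{Bamb,Gud,14LPS} and to evaluate $d(G)$ on each resulting family. Such a group is either almost simple or affine. In the affine case $G=VG_0$ with $V$ elementary abelian and $G_0\leq\GL(V)$ acting irreducibly, and $G_0$ has all its orbits on $V\setminus\{0\}$ of one common length: length $|G_0|$ gives a Frobenius group, length $|V|-1$ gives a $2$-transitive group, and the intermediate lengths are the properly $\32$-transitive examples. The almost simple members are either $2$-transitive, hence drawn from the known CFSG list, or appear on the short list of~\cite{Bamb}. For every almost simple $G$ the socle is $2$-generated by~\cite{AsGur}, and the general bound $d(G)\leq3$ of~\cite{95VolLuc} can be sharpened to $d(G)\leq2$ by inspecting this finite collection together with the outer automorphisms that occur; this settles all non-affine cases.

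For the affine groups I would begin from the inequality $d(G)\geq d(G_0)$, valid because $G_0\cong G/V$ is a quotient of $G$. For the opposite direction the tool is Gaschütz's lemma on lifting generators through an abelian normal subgroup: since $V$ is an irreducible $\mathbb{F}_p[G_0]$-module, a generating set of $G_0$ lifts to one of $G$ exactly when $H^1(G_0,V)$ offers no obstruction, in which case $d(G)=d(G_0)$. When $G$ is Frobenius the complement $G_0$ acts fixed-point-freely, so $\gcd(|V|,|G_0|)=1$ and $H^1(G_0,V)=0$ by coprimality; as every Frobenius complement is $2$-generated, this forces $d(G)=2$ with no exceptions. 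Hence any group violating $d(G)\leq2$ must be one of the non-Frobenius affine groups, in which some nonidentity element of $G_0$ fixes a nonzero vector.

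It remains to treat the non-Frobenius affine groups. The solvable ones are governed by Passman's classification~\cite{Pass67,Pass67a,Pass69}, while the insoluble ones form a short explicit list on which $2$-generation is checked directly; thus the whole difficulty concentrates in Passman's families, where for each $G_0$ I would bound $d(G_0)$ below through the abelianization $G_0/[G_0,G_0]$ and control the lifting obstruction through $H^1(G_0,V)$. In case~(i) the group $G_0$ of two-dimensional monomial matrices of determinant $\pm1$ is generated by a split torus element $d$, the involution $s=\operatorname{diag}(1,-1)$, and the swap $w$, and the relations $wdw^{-1}=d^{-1}$ and $wsw^{-1}=d^{(q-1)/2}s$ place both $d^2$ and $d^{(q-1)/2}$ in $[G_0,G_0]$. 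If $q\equiv1\pmod4$ then $(q-1)/2$ is even, the image of $d$ survives, and $G_0/[G_0,G_0]\cong C_2^3$, so $d(G_0)=3$; if $q\equiv3\pmod4$ then $d$ lies in $[G_0,G_0]$ and the abelianization drops to $C_2^2$. The action is coprime here ($p$ is odd and $p\nmid q-1$), so $H^1(G_0,V)=0$ and $d(G)=d(G_0)$ equals $3$ or $2$ accordingly---exactly the dichotomy recorded in~(i).

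The one genuinely hard point is case~(ii), the four-dimensional group over the field of order~$3$. Here the defining characteristic divides $|G_0|$, the action is no longer coprime, $H^1(G_0,V)$ need not vanish, and the value $d(G)=4$ reflects both a large abelianization and a true cohomological obstruction to lifting generators from $G_0$. I expect this to be the main obstacle: it escapes the uniform coprime argument and instead demands an explicit study of the particular group $G_2$ of Table~\ref{tab:1}, establishing $d(G)\geq4$ via the Frattini quotient $G/\Phi(G)$ and $d(G)\leq4$ by exhibiting four generators. Proving that this is the \emph{only} exception across the whole of Passman's list---so that no further congruence or small-field coincidence yields another---is where the bulk of the remaining casework lies.
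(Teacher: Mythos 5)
Your overall skeleton (affine versus almost simple split, the classification of $\12$-transitive linear groups, small-case checks) matches the paper's, but your replacement for its key lemma is a genuine gap. The paper invokes the Lucchini--Menegazzo theorem (Theorem~\ref{LM}): if a noncyclic finite group has a unique minimal normal subgroup $N$, then $d(G)=\max\{2,d(G/N)\}$. In the primitive affine case $V$ is the unique minimal normal subgroup (as $G_0$ acts irreducibly), so $d(G)=\max\{2,d(G_0)\}$ holds \emph{unconditionally} --- no coprimality hypothesis and no cohomology computation. Your Gasch\"utz-plus-$H^1$ route recovers this equality only when $H^1(G_0,V)=0$, i.e.\ in the coprime situations, and leaves exactly the hard defining-characteristic cases (the groups in Items~(vii) and~(viii) of Lemmas~\ref{12tranLin} and~\ref{tranLin}, in particular everything inside $\GL_4(3)$) with no bridge between $d(G_0)$ and $d(G)$. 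Worse, your forecast for case~(ii) is factually wrong: there is no ``true cohomological obstruction'' for $G_2$ --- Table~\ref{tab:1} records a minimal generating set of four matrices for its zero stabilizer, so $d(G_0)=4$ and $d(G)=\max\{2,d(G_0)\}=4$ by Theorem~\ref{LM}; the extension contributes nothing. Your ``exactly when'' formulation of the lifting criterion is also false as stated, since Gasch\"utz's lemma lifts generating $d$-tuples from any quotient whenever $d\geq d(G)$, irrespective of $H^1$.

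Two smaller but real gaps remain. In case~(i) with $q\equiv3\pmod4$, your abelianization computation gives only the lower bound $d(G_0)\geq2$ via $C_2\times C_2$; concluding $d(G_0)=2$ requires exhibiting two generators, which the paper does in Lemma~\ref{Passman} by noting $(vw)^{(q-1)/2}=-v$ and $(-v)^u=v$, whence $S_0(q)=\langle u,vw\rangle$ --- without this step the $q\equiv3\pmod4$ groups are not excluded from the exception list. Finally, your program stops where the paper's decisive verification begins: showing that $G_2$ is the \emph{only} further exception. The paper settles this not by hand analysis of Passman's families but by exhaustive computation (GAP with IRREDSOL) over the finitely many residual degrees $3^2, 5^2, 7^2, 3^4, 11^2, 17^2, 23^2$, which simultaneously discovers the exceptions $G_1, G_3, G_4$, identifies them permutationally with the $S_0(q)$ family of Item~(i), handles the nonsolvable groups of degree $3^4$, and checks the linear imprimitivity of the stabilizers asserted in the theorem. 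Your plan is plausibly completable along these lines, but as written the determination of the exceptional list and the solvability and imprimitivity claims of the statement remain unproven.
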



As we will see, all the exceptions from Items~(i) and~(ii) of Theorem~\ref{main} are of rank greater than 2, so the following fact (nowhere mentioned, as far as we are aware) takes place.

\begin{cor}\label{2tran}
Every finite $2$-transitive permutation group is $2$-generated.
\end{cor}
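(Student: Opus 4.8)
The plan is to reduce to Theorem~\ref{main} and then eliminate its two exceptional families by a rank count. First I would invoke the classical facts that a $2$-transitive group of degree $n\geq3$ is primitive, and that it is $\32$-transitive: it is transitive, its point stabiliser $G_\alpha$ is nontrivial, and $G_\alpha$ is transitive on $\Omega\setminus\{\alpha\}$, so the orbits of $G_\alpha$ other than $\{\alpha\}$ are trivially all of the same size. The degenerate cases $n\leq2$ are dealt with at once, since the only $2$-transitive group of degree~$2$ is $\Sym(2)\cong C_2$, which is $1$-generated.

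Assume then that $G$ is $2$-transitive of degree $n\geq3$ and, for contradiction, that $d(G)>2$. By Theorem~\ref{main}, $G=VG_0$ is a solvable affine group whose zero stabiliser $G_0\leq\GL(V)$ is imprimitive as a linear group and lies in family~(i) or~(ii). The key reformulation is that $2$-transitivity is equivalent to $G$ having rank~$2$, that is, to $G_0$ being transitive on $V\setminus\{0\}$; so the goal becomes to produce at least two $G_0$-orbits of nonzero vectors.

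For this I would avoid the explicit descriptions in~(i), (ii) and Table~\ref{tab:1} and argue uniformly from imprimitivity alone. Fix a direct-sum decomposition $V=V_1\oplus\cdots\oplus V_t$ with $t\geq2$ whose summands are permuted by $G_0$. Because each $g\in G_0$ carries every $V_i$ isomorphically onto some $V_j$, it preserves the number of nonzero components of a vector, its block-weight. Since $t\geq2$ and every $V_i\neq0$, there are nonzero vectors of block-weight~$1$ and of block-weight~$2$, which therefore lie in distinct $G_0$-orbits. Hence $G_0$ is not transitive on $V\setminus\{0\}$, so $\rk G\geq3$ and $G$ cannot be $2$-transitive, a contradiction; thus $d(G)\leq2$.

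The hard part, such as it is, is purely at the boundary: I would need to check carefully that the hypotheses of Theorem~\ref{main}, primitivity and $\32$-transitivity with a nontrivial stabiliser, really do hold for every $2$-transitive group, including small degrees where $G_\alpha$ might otherwise be trivial, and to make sure that ``imprimitive as a linear group'' supplies exactly the permuted decomposition that the block-weight invariant requires. Once these definitional points are settled, the single invariant kills both exceptional families simultaneously.
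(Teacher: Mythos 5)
Your proof is correct, and its key elimination step takes a genuinely different route from the paper's. The paper derives the corollary from Theorem~\ref{main} exactly as you do, but disposes of the exceptional groups by reading off their ranks from the GAP computations recorded in Table~\ref{tab:1} (the exceptions have ranks $4$, $6$, $6$ and $10$, all greater than $2$), so the rank count there is computational. You instead exploit the clause in Theorem~\ref{main} that $G_0$ is imprimitive as a linear group: with respect to a decomposition $V=V_1\oplus\cdots\oplus V_t$, $t\geq2$, whose summands are permuted by $G_0$, the block-weight of a vector is a $G_0$-invariant (each $g\in G_0$ maps $V_i$ isomorphically onto some $V_j$, so nonzero components go to nonzero components), whence weight-$1$ and weight-$2$ vectors lie in distinct $G_0$-orbits and $\rk(G)\geq3$ uniformly, with no case analysis. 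This buys transparency and a slightly more general fact --- any affine group whose point stabilizer is linearly imprimitive has rank at least~$3$; the paper itself remarks in Section~\ref{s:min} that the groups $S_0(q)$ are obviously intransitive on nonzero vectors, but for the corollary it relies on the computed ranks. What the paper's approach buys in exchange is the exact ranks, which it needs anyway for Corollary~\ref{sub2tran}, where the numbers $r-1=3,5$ of nonzero orbits enter the divisibility conditions. Your boundary checks (degree at most~$2$; nontriviality of $G_\alpha$ for $n\geq3$; that linear imprimitivity supplies exactly the permuted direct-sum decomposition) are all sound; the one caveat worth noting is that the imprimitivity clause you lean on is itself established in the paper partly by computation (for $G_2$ it is verified in GAP), so your argument is computation-free only modulo Theorem~\ref{main} as stated.
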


The $2$-homogeneous but not $2$-transitive groups were described in~\cite[Proposition~3.1]{69Kan}. They are subgroups of $\AGmL_1(q)$, so the next assertion follows.

\begin{cor}\label{2hom}
Every finite $2$-homogeneous permutation group is $2$-generated.
\end{cor}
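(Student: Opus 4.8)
The plan is to split on whether $G$ is $2$-transitive. If it is, then $d(G)\le 2$ is exactly Corollary~\ref{2tran}, so from now on assume $G$ is $2$-homogeneous but not $2$-transitive. Here I would quote Kantor~\cite[Proposition~3.1]{69Kan}: the degree of $G$ is a prime power $q\equiv 3\pmod 4$ and $G\le\AGmL_1(q)$. Since $G$ is primitive (every $2$-homogeneous group is) and contained in this solvable affine group, it contains the full translation subgroup $V\cong\mathbb{F}_q^{+}$, and so $G=VG_0$ with point stabilizer $G_0\le\GmL_1(q)$.

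Two structural facts then do the work. First, $\GmL_1(q)=\mathbb{F}_q^{\ast}\rtimes\operatorname{Gal}(\mathbb{F}_q/\mathbb{F}_p)$ is metacyclic, hence so is its subgroup $G_0$, giving $d(G_0)\le 2$. Second, $V$ is a faithful irreducible $\mathbb{F}_p[G_0]$-module: faithfulness is immediate from $G_0\le\GL(V)$, while for irreducibility I would use that $G_0$ has at most two orbits on $V\setminus\{0\}$, the nonzero squares $S$ and the nonsquares $N$, which are interchanged by multiplication by $-1$ (this is where $q\equiv 3\pmod 4$ is used). A proper nonzero submodule $W$ would have $W\setminus\{0\}$ a union of these orbits, but $|S|+1=(q+1)/2$ is not a power of $p$, so the only possibility is $W\setminus\{0\}=S\cup N$, i.e.\ $W=V$.

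I would finish by invoking a Gaschütz-type generation lemma for split extensions: if $H$ is a $2$-generated finite group and $V$ is a faithful irreducible $\mathbb{F}_p[H]$-module, then $VH$ is $2$-generated. Applied to $H=G_0$, this yields $d(G)=d(VG_0)\le 2$, as required; the degenerate case $G_0=1$ is subsumed, since irreducibility then forces $\dim_{\mathbb{F}_p}V=1$ and $G\cong C_p$.

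The lemma on generating split extensions is the only substantive external input, and I expect it, together with the irreducibility of $V$, to be the crux; everything else is bookkeeping on $\AGmL_1(q)$. As a self-contained cross-check that bypasses the lemma, one can route the argument through Theorem~\ref{main}. Indeed, when $G_0\neq 1$ the orbits $S,N$ are nontrivial of equal size $(q-1)/2$, so $G$ is a primitive $\32$-transitive group and the theorem applies; its two exceptional families are excluded immediately, because their degrees $q^2$ (Item~(i)) and $3^4=81$ (Item~(ii)) are odd squares and hence $\equiv 1\pmod 4$, contradicting $q\equiv 3\pmod 4$, while the leftover possibility $G_0=1$ forces $G\cong C_3$ of degree $3$.
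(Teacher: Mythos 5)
Your proof is correct, and its skeleton---dispose of the $2$-transitive case via Corollary~\ref{2tran}, quote Kantor~\cite[Proposition~3.1]{69Kan}, write $G=VG_0$ with $G_0\le\GmL_1(q)$ metacyclic so that $d(G_0)\le2$, then lift the bound from $G_0$ to $G$---is the same as the paper's, which settles Corollary~\ref{2hom} in one line precisely because the lifting mechanism is already in place: in Section~\ref{s:32proof} the paper observes that a primitive affine group has $V$ as its unique minimal normal subgroup, so Theorem~\ref{LM} gives $d(G)=\max\{2,d(G_0)\}$, and subgroups of the metacyclic group $\GmL_1(q)$ are $2$-generated (this is exactly the treatment of Item~(iv) of Lemmas~\ref{12tranLin} and~\ref{tranLin}). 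The one real divergence is your lifting step: instead of Theorem~\ref{LM} you invoke an external Gasch\"utz-type lemma for split extensions with a faithful irreducible module. That lemma is true (it follows from the complement count together with $|H^1(G_0,V)|<|V|$ from~\cite{AsGur}), but it is also an immediate special case of Theorem~\ref{LM}, since faithfulness plus irreducibility forces $V$ to be the unique minimal normal subgroup of $VG_0$; citing Theorem~\ref{LM} directly would have kept your argument self-contained within the paper and made your by-hand irreducibility check superfluous, as primitivity of $G$ is equivalent to irreducibility of $G_0$ on $V$. Your orbit-counting verification of irreducibility is nonetheless sound, with one caveat: the identification of the two suborbits with the nonzero squares and nonsquares is asserted rather than proved, but it is also never used---only the existence of exactly two $G_0$-orbits of size $(q-1)/2$ enters, and that follows from $2$-homogeneity without $2$-transitivity. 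Similarly, the claim that $G$ contains all translations deserves a word (it is part of Kantor's description, or follows since the socle of the primitive solvable group $G$ is a regular elementary abelian normal subgroup, necessarily the translation group), though the paper is equally terse on this point. Finally, your fallback route through Theorem~\ref{main} is a legitimate second proof and a nice touch: such $G$ with $G_0\ne1$ is indeed primitive and $\32$-transitive, and the exceptional degrees $q^2$ and $3^4$ are odd squares, hence $\equiv1\pmod4$, which is incompatible with degree $\equiv3\pmod4$; this parallels the paper's own remark, before Corollary~\ref{2tran}, that all exceptions have rank greater than~$2$.
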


As observed above, a normal subgroup of a $2$-transitive group is $\32$-transitive, so Theorem~\ref{main} allows to describe all situations when a normal primitive subgroup of a $2$-transitive group is not $2$-generated.

\begin{cor}\label{sub2tran}
Let $G$ be a primitive normal subgroup of a $2$-transitive permutation group~$T$. Then either $d(G)\leq2$ or up to permutation isomorphism one of the following holds:
\begin{enumerate}
\item $G=G_1$ from Table~$\ref{tab:1}$ and $T$ is any subgroup of the group $M_1$ from Table~$\ref{tab:2}$ of Section~\emph{\ref{s:32small}}, containing $G$ and such that the index $|T:G|$ is a multiple of~$3;$
\item $G=G_2$ from Table~$\ref{tab:1}$ and $T$ is any subgroup of the group $M_2$ from Table~$\ref{tab:2}$ of Section~\emph{\ref{s:32small}}, containing $G$ and such that the index $|T:G|$ is a multiple of~$5$.
\end{enumerate}
\end{cor}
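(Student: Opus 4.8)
The plan is to reduce the statement to a purely linear problem about the two exceptional groups of Theorem~\ref{main} and then read off both lists from the structure of their normalizers. Since $T$ is $2$-transitive it is primitive, so the nontrivial normal subgroup $G$ is transitive; moreover $G_\alpha=G\cap T_\alpha$ is normal in $T_\alpha$, which is transitive on $\Omega\setminus\{\alpha\}$, and hence $G_\alpha$ is $\frac{1}{2}$-transitive on $\Omega\setminus\{\alpha\}$ by \cite[Theorem~10.3]{64Wiel}. Thus $G$ is a primitive $\frac{3}{2}$-transitive group, and Theorem~\ref{main} applies: either $d(G)\le 2$, giving the first alternative of the corollary, or $G=VG_0$ is one of the two exceptional solvable affine groups. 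From now on I assume $G$ is exceptional.

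The next step is to show that $T$ is itself affine with the same translation group $V$. As $G$ is primitive affine, $V=\Soc(G)$ is its unique minimal normal subgroup, hence characteristic in $G$; since $G\trianglelefteq T$, this forces $V\trianglelefteq T$. Being an abelian regular normal subgroup, $V$ makes $T=V\rtimes T_0$ affine with point stabiliser $T_0\le\GL(V)$, and $T$ is $2$-transitive precisely when $T_0$ is transitive on $V\setminus\{0\}$. From $G\trianglelefteq T$ we obtain $G_0=G\cap T_0\trianglelefteq T_0$ and $|T:G|=|T_0:G_0|$. Hence the whole question becomes: \emph{classify the transitive linear groups $T_0\le\GL(V)$ containing the exceptional $G_0$ as a normal subgroup.}

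I would resolve this using the classification of finite transitive linear groups together with the explicit structure of $G_0$. For Item~(i) the key observation is that the two-dimensional monomial $\det=\pm1$ group over $\mathbb{F}_q$ (with $q\equiv1\pmod4$) contains an irreducible quaternion subgroup $Q_8$, and any transitive $T_0\trianglerighteq G_0$ must lie in $N_{\GL(V)}(G_0)$. For $q>5$ the group $G_0$ retains a characteristic reducible split torus, whose unique eigenline pair is then preserved by $T_0$, making $T_0$ monomial and therefore intransitive; only for $q=5$, where $G_0=Q_8\circ C_4$ is governed by the irreducible $Q_8$, does the normalizer $N_{\GL_2(5)}(Q_8)$ (of order $96$) act transitively. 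This singles out $G=G_1$ and identifies $M_1=V\rtimes N_{\GL_2(5)}(Q_8)$. Item~(ii) already concerns a single group $G_2$, so here I only have to exhibit the transitive overgroup $M_2$ inside $\GL_4(3)$ and verify its maximality; this is the extraspecial-normalizer computation recorded in Table~\ref{tab:2}.

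Finally, the divisibility conditions drop out automatically. Transitivity forces $|V\setminus\{0\}|$ to divide $|T_0|=|G_0|\cdot|T_0:G_0|$, so the part of $|V\setminus\{0\}|$ coprime to $|G_0|$ must divide $|T_0:G_0|=|T:G|$. In case~(i) this coprime part is the factor $3$ of $24=|V\setminus\{0\}|$, and in case~(ii) it is the factor $5$ of $80=|V\setminus\{0\}|$; conversely, every $T$ with $G\le T\le M_i$ and $|T:G|$ divisible by the relevant prime turns out to be transitive, which completes both lists. The main obstacle is the case analysis in Item~(i): ruling out all $q>5$ rigorously, that is, proving that the monomial group admits no transitive linear overgroup once $q$ is large. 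This is exactly where the line-pair (imprimitivity) obstruction for $G_0$ and the classification of transitive linear groups must be combined with care.
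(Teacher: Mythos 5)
Your proposal is correct and follows the paper's overall skeleton --- reduce via Theorem~\ref{main}, note that $V=\Soc(G)$ is characteristic in $G$ so that $T=V\rtimes T_0$ is affine with $T_0\leq N_{\GL(V)}(G_0)$, then settle the divisibility conditions --- but it deviates at two points, both legitimately. First, where the paper disposes of $S_0(q)$ for $q>5$ by running $T_0$ through the classification of transitive linear groups (Lemma~\ref{tranLin}, whose item-by-item check leaves only Items~(vii)--(viii)) and then verifying in GAP that the normalizers of $G_3$ and $G_4$ in $\Sym(n)$ are not $2$-transitive, you propose a uniform structural argument: for $q>5$ the diagonal subgroup $D\cong C_2\times C_{q-1}$ of $S_0(q)$ is generated by its elements of order greater than $4$, hence characteristic (precisely this fails at $q=5$, where $G_0=Q_8\circ C_4$), and its two homogeneous components $V_1,V_2$ are non-isomorphic $\mathbf{F}_pD$-modules (compare the characters $\alpha$ and $\pm\alpha^{-1}$ at $\alpha=1$), so any transitive $T_0$ normalizing $G_0$ would permute $\{V_1,V_2\}$ and leave the proper subset $V_1\cup V_2$ invariant, impossible since $2(q-1)<q^2-1$. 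You rightly flag these details as the point needing care, but they do go through, and this replaces part of the machine check by a conceptual proof covering $G_3$ and $G_4$ without computing their normalizers. Second, for necessity you use Lagrange --- $|V^*|$ divides $|T_0|$, so its part coprime to $|G_0|$ ($3$ for $G_1$, $5$ for $G_2$) divides $|T_0:G_0|=|T:G|$ --- whereas the paper argues that $T_0$ transitively permutes the $r-1$ orbits of $G_0$ on $V^*$ with $G_0$ inside an orbit stabilizer, so $r-1=\rk(G)-1$ divides $|T:G|$; the constants coincide here, and your argument is more elementary, though the paper's explains the appearance of the rank. Like the paper, you still defer to computation for the transitivity of the order-$96$ normalizer at $q=5$, for the extraspecial normalizer $M_2$ in $\GL_4(3)$ (your identification matches $|M_{2,0}|=3840$ in Table~\ref{tab:2}), and for sufficiency, which the paper checks by showing $\langle G_0,g\rangle$ transitive on $V^*$ for every $g\in M_0$ of order $r-1$; it would strengthen your write-up to make explicit the Cauchy step producing such a $g\in T_0$ when $r-1$ (an odd prime, with $G_0$ a $2$-group) divides $|T_0:G_0|$, and to note the degenerate case $G_\alpha=1$, where primitivity forces $G\cong C_p$ and $d(G)=1$.
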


The important subclass of $\32$-transitive groups is the class of Frobenius groups. Let $G=KH$ be a Frobenius group with kernel $K$ and complement $H$. If $G$ is primitive, then $d(G)=\max\{2, d(H)\}$, so it suffices to find the number of generators of~$H$ (see details in Section~\ref{s:min}). If $H$ is solvable, then $H$ is $2$-generated due to Gruenberg's result~\cite[Lemma~9.9]{KWG} (in fact, he proved that $d(H)\leq2$ for every group $H$ whose abelian subgroups are cyclic, the property any Frobenius group enjoys). Unfortunately, we were unable to find the same assertions in the general case, so we proved them here (see Theorem~\ref{abeliancyclic} and Corollary~\ref{Frob} below).

Concluding the introduction, let us briefly discuss the number of generators of an imprimitive $\32$-transitive group $G$ which, by aforementioned Wielandt's result, is Frobenius. If $G$ is solvable, then Gruenberg proved that $d(G)=d_H(K)+1$, where $d_H(K)$ is the minimum number of generators of $K$ as $H$-group \cite[Proposition~9.5]{KWG}. We do not know if this is true in the general case. However, finding the number $d_H(K)$ does not seem easier than $d(G)$ itself. From our point of view, the best way to find $d(G)$ for a given imprimitive Frobenius group is to reduce the problem to the case when the kernel $K$ is an elementary abelian group and then apply \cite[Theorem~C]{AsGur}.

\section{On minimal generating sets of finite groups}\label{s:min}

Our main tool from finite group theory is the following general result.

\begin{theorem}\label{LM}\emph{\cite{97LucMen}} If $G$ is a noncyclic finite group with a unique minimal normal subgroup~$N$, then $d(G)=\max\{2,d(G/N)\}$.
\end{theorem}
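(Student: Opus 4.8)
The lower bound is immediate: $G$ is noncyclic, so $d(G)\geq2$, and $G/N$ is a homomorphic image of $G$, so $d(G)\geq d(G/N)$; hence $d(G)\geq m$, where $m:=\max\{2,d(G/N)\}$. The whole content is therefore the reverse inequality $d(G)\leq m$, which I would prove by lifting a generating set. Fix elements $g_1,\dots,g_m\in G$ whose images generate $G/N$ (if $d(G/N)=1$ we still take $m=2$ generators, allowing one redundant image), so that $HN=G$ for $H=\langle g_1,\dots,g_m\rangle$; the task is to replace the $g_i$ by suitable $g_in_i$ with $n_i\in N$ so that $H=G$. Throughout I would use that, because $N$ is the unique minimal normal subgroup, $N$ is either elementary abelian with $C_G(N)=N$, or a direct product $N=S^k$ of isomorphic nonabelian simple groups with $C_G(N)=1$; in both cases $G$ embeds in $\Aut(N)$ modulo $N$ in a controlled way.

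First I would dispose of the Frattini case $N\leq\Phi(G)$: then the elements of $N$ are nongenerators, so any lift of a generating set of $G/N$ already generates $G$, giving $d(G)\leq d(G/N)$; moreover $G/N$ is then noncyclic, so $d(G/N)\geq2$ and $m=d(G/N)$. Hence one may assume $N\not\leq\Phi(G)$, so that some maximal subgroup $M$ satisfies $G=NM$. When $N$ is abelian, $N\cap M\trianglelefteq G$ forces $N\cap M=1$ by minimality, so $M$ is a complement, $G=N\rtimes M$ with $M\cong G/N$.

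In the abelian case write $V=N$, a faithful nontrivial irreducible $\mathbb{F}_pM$-module (faithfulness and irreducibility come from $C_G(N)=N$ and minimality). The plan is a Gasch\"utz-style count. Writing elements of $G=V\rtimes M$ as products, for a fixed generating $m$-tuple $(m_1,\dots,m_m)$ of $M$ the subgroup $H=\langle m_1v_1,\dots,m_mv_m\rangle$ satisfies $HV=G$ and $H\cap V$ is an $M$-submodule of $V$, hence $0$ or $V$. The bad tuples, those with $H\cap V=0$, are exactly those generating a complement to $V$, and each complement arises from a unique tuple; since complements correspond to derivations, there are $|Z^1(M,V)|=|V|\,|H^1(M,V)|$ of them (using $C_V(M)=0$). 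Thus a good tuple exists as soon as $|V|^{m-1}>|H^1(M,V)|$, and by the Aschbacher--Guralnick bound $|H^1(M,V)|<|V|\leq|V|^{m-1}$ (valid since $m\geq2$) this always holds. Therefore $d(G)\leq m$, completing the abelian case.

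The nonabelian case $N=S^k$ is where I expect the real difficulty, since now $N$ need not be complemented and $H\cap N$ need not be normal in $N$. My plan is still to choose $n_i\in N$ making $H=\langle g_in_i\rangle$ contain $N$. Here $N$ permutes the simple factors $S_1,\dots,S_k$ trivially, so $H$ inherits from $G/N$ a transitive action on these factors, and $H\cap N$ is an $H$-invariant, hence subdirect-type, subgroup of $S^k$. The heart of the argument is a counting estimate for nonabelian chief factors, analogous to Gasch\"utz but relying on the classification of finite simple groups: one shows that the tuples for which $H\cap N\neq N$ are too few to exhaust all lifts, using that each $S$ is $2$-generated and that transitivity on the factors links the coordinates. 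The uniqueness of $N$ is decisive because it guarantees that the only obstruction to generation sits in a single chief factor, so the one unit of slack built into $m\geq2$ is enough to absorb it; carrying out this nonabelian counting, and handling the interaction between the permutation action and the diagonal subgroups of $S^k$, is the main technical obstacle.
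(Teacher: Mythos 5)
Your lower bound, the Frattini reduction, and the abelian case are complete and correct: once $G=V\rtimes M$ with $V$ a faithful irreducible module and $C_V(M)=0$, the bijection between ``bad'' lifting tuples and complements (equivalently, derivations), the count $|Z^1(M,V)|=|V|\,|H^1(M,V)|$, and the bound $|H^1(M,V)|<|V|$ from Theorem~A of \cite{AsGur} give exactly the classical Gasch\"utz-style argument, and this is essentially how the abelian half of the Lucchini--Menegazzo theorem is proved. Note that the paper itself offers no proof at all --- Theorem~\ref{LM} is quoted from \cite{97LucMen} as a black box --- so the only meaningful comparison is with the original proof, against which your abelian case stands up fully.

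The genuine gap is the nonabelian case, which you explicitly leave as a plan: you assert that ``the tuples for which $H\cap N\neq N$ are too few to exhaust all lifts'' and call carrying this out the main technical obstacle, but this counting estimate \emph{is} the theorem --- it is precisely the content of \cite{97LucMen}, and nothing in your sketch supplies it. Concretely, the mechanism that makes the abelian case work disappears: when $N=S^k$ is nonabelian there is no bijection between bad tuples and a cohomologically controlled set of complements, because a subgroup $H$ with $HN=G$ and $H\cap N\neq N$ need not intersect $N$ in a normal (or even subdirect) subgroup --- the projections of $H\cap N$ to the factors $S_i$ can be arbitrary proper subgroups, not only the diagonal-type configurations your sketch envisages --- and the number of such supplements is not bounded by any naive analogue of $|Z^1|$. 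Making the count work requires CFSG-dependent input well beyond ``each $S$ is $2$-generated'': one needs quantitative control of the maximal subgroups of $G$ supplementing $N$ (equivalently, a positive-probability generation statement for monolithic groups with nonabelian socle), which in \cite{97LucMen} rests on detailed estimates for almost simple groups, bounds on $|\Out(S)|$, and the structure of maximal subgroups of $S^k\rtimes(\text{transitive top})$. Without that estimate, the uniqueness of $N$ and the ``one unit of slack'' in $m\geq2$ carry no force, so the proposal proves the theorem only when $N$ is abelian.
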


The nonabelian simple groups are $2$-generated, so applying Theorem~\ref{LM} to the case when $N$ is a nonabelian simple group, we obtain that for an almost simple group $G$ with socle~$N$, $d(G)>2$ if and only if $d(G/N)>2$. Since the group $G/N$ is isomorphic to a subgroup of $\Out(N)$ which is known modulo CFSG for any simple group~$N$, one can easily find all almost simple groups $G$ with $d(G)>2$ (for each of them $d(G)=3$). The next lemma summarizes the information we need on such groups, see the corollary to~\cite[Theorem~1]{95VolLuc} and arguments after it.

\begin{lemma}\label{almost} Let $G$ be an almost simple group with the socle isomorphic to simple group~$S$. If $d(G)>2$, then one of the following holds:
\begin{enumerate}
\item $S$ is an orthogonal group of even dimension\emph{;}
\item $S$ is a linear group $\PSL_n(q)$ of even dimension $n$ and $G$ is not a subgroup of $\PGmL_n(q)$.
\end{enumerate}
\end{lemma}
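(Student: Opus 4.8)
The plan is to reduce the statement to a question about the outer automorphism group of the socle and then read off the answer from the classification of finite simple groups. First I would note that, since $G$ is almost simple with socle $S$, the group $S$ is the unique minimal normal subgroup of $G$: indeed $C_G(S)=1$ because $S\leq G\leq\Aut(S)$ and $S$ has trivial centre, so any minimal normal subgroup of $G$ meets $S$ nontrivially and hence, by simplicity, coincides with it. As $S$ is nonabelian, $G$ is noncyclic, so Theorem~\ref{LM} applies and gives $d(G)=\max\{2,d(G/S)\}$. Thus $d(G)>2$ if and only if $d(G/S)>2$, where $G/S$ is isomorphic to a subgroup of $\Out(S)$. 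The task therefore becomes: find the simple groups $S$ for which $\Out(S)$ has a subgroup that is not $2$-generated, and the rest is an inspection of the CFSG list of outer automorphism groups. Since the listed conditions are only \emph{necessary}, it suffices to prove the contrapositive, namely that every socle outside the two families listed forces $d(G)\leq2$.

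I would first dispose of the non-Lie-type socles: by CFSG, if $S$ is alternating or sporadic then $\Out(S)$ is cyclic, except $\Out(\Alt(6))\cong C_2\times C_2$, and in every case all subgroups are $2$-generated, so $d(G)\leq2$ and $S$ must be of Lie type. For $S$ of Lie type over a field of order $q=p^f$ I would use the standard description $\Out(S)=\hat D\rtimes(\langle\phi\rangle\times\langle\gamma\rangle)$, where $\hat D$ is the group of diagonal automorphisms, $\phi$ a field automorphism of order $f$, and $\gamma$ a graph automorphism (with the usual modifications for the twisted types and for triality in $D_4$). The organizing principle is that a subgroup of $\Out(S)$ can require three generators only if it admits an elementary abelian section of rank at least $3$; as $\langle\phi\rangle\times\langle\gamma\rangle$ has rank at most $2$, such a section forces $\hat D$ to supply a further independent involution. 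In other words one needs three independent commuting involutions coming from the diagonal, graph and field automorphisms, and this is possible only when $\hat D$ has even order and a graph automorphism is present.

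With this principle I would check that every Lie-type family outside the two listed ones yields $d(G)\leq2$. The metacyclic cases are immediate: for the unitary groups $\mathrm{PSU}_n(q)$ and for the remaining twisted types (the Suzuki and Ree groups and the groups ${}^2E_6(q)$ and ${}^3D_4(q)$) the complement of $\hat D$ is cyclic, so $\Out(S)$ is cyclic-by-cyclic; and in the linear case the subgroup corresponding to $\PGmL_n(q)$ is exactly $\hat D\rtimes\langle\phi\rangle$. Since subgroups of metacyclic groups are metacyclic, hence $2$-generated, this already gives $G\leq\PGmL_n(q)\Rightarrow d(G)\leq2$, i.e. the condition $G\not\leq\PGmL_n(q)$ in~(ii). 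For the symplectic groups, the odd-dimensional orthogonal groups and $E_7(q)$ the graph automorphism is absent and $\hat D$ has order at most $2$, so $\Out(S)$ is abelian of rank at most $2$. The genuinely delicate family is the one in which $\hat D$ has odd order but a graph automorphism is present, namely $\PSL_n(q)$ with $n$ odd and $E_6(q)$: here $\gamma$ inverts the odd cyclic group $\hat D$, producing dihedral-type quotients whose abelianization is only $C_2$, and I would verify directly that these groups and all their subgroups are $2$-generated, so that $n$ odd forces $d(G)\leq2$ — the parity condition in~(ii).

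The two families that survive this analysis are precisely those in the statement: for even-dimensional linear groups $\hat D$ has even order, so with the graph automorphism it already provides two independent commuting involutions, while for even-dimensional orthogonal groups $\hat D\cong C_2\times C_2$ supplies two involutions on its own, which combine with the graph and field automorphisms to give a rank-$3$ section. The main obstacle I anticipate is exactly this uniform, family-by-family bookkeeping of $\Out(S)$: carefully ruling out rank-$3$ sections in the borderline odd-diagonal cases even when $f$ is large, and matching the two exceptions to the precise structure of $\hat D$, $\phi$ and $\gamma$ — including the distinction between plus and minus orthogonal types and the triality in $D_4$ — which is the content of the corollary to \cite[Theorem~1]{95VolLuc} that we are quoting here.
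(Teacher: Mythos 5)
Your reduction is exactly the paper's: you invoke Theorem~\ref{LM} to get $d(G)=\max\{2,d(G/S)\}$, so everything hinges on which subgroups of $\Out(S)$ fail to be $2$-generated. At that point the paper simply cites the corollary to \cite[Theorem~1]{95VolLuc}, whereas you attempt to redo that classification yourself, and your attempt rests on a false organizing principle: it is \emph{not} true that a finite group requires three generators only if it has an elementary abelian section of rank at least~$3$. Consider $H=(C_3\times C_3)\rtimes C_2$ with the involution $t$ inverting both factors. Here $d(H)=3$: two elements of $V=C_3\times C_3$ generate a subgroup of $V$, while any pair involving an element $vt$ generates a group of order at most~$6$, since $(vt)^2=1$, conjugation by $vt$ inverts $V$, and $(vt)(wt)=vw^{-1}$; so no two elements generate $H$. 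Yet $|H|=18$ admits no section of order $8$ or $27$, hence no elementary abelian section of rank~$3$. For solvable groups the number $d$ is governed by Gasch\"utz's theory of crowns -- two independent copies of the same nontrivial one-dimensional module among the complemented chief factors already force three generators -- and not by the rank of elementary abelian sections, so your criterion of ``three independent commuting involutions'' is neither necessary nor sufficient.

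This is not a peripheral slip, because the configurations you must rule out inside $\Out(S)$ -- a cyclic diagonal group $\hat D$ inverted by the graph automorphism $\gamma$ and acted on by the field automorphism $\phi$ via $p$-powering -- are precisely of the ``inverted module'' shape of the counterexample above. Hence the contrapositive you rely on, ``no rank-$3$ elementary abelian section $\Rightarrow$ $2$-generated,'' is unjustified exactly in the cases where the work lies: the odd-diagonal linear groups, $E_6(q)$, and generally any subgroup mixing $\hat D$ with $\phi$ and $\gamma$, where one must exclude a crown of multiplicity two rather than a large elementary abelian section. (Your metacyclic cases are sound: for unitary, Suzuki, Ree, ${}^2E_6$, ${}^3D_4$ socles and for $G\leq\PGmL_n(q)$ the relevant quotient of $G/S$ sits in a metacyclic group, and subgroups of metacyclic groups are metacyclic, hence $2$-generated -- this correctly accounts for the condition $G\not\leq\PGmL_n(q)$ in~(ii). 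A small inaccuracy: the graph automorphism is not always absent for symplectic groups, as $\Sp_4(2^f)$ has one, though there $\Out$ is cyclic of order $2f$ and the conclusion stands.) To close the gap you would need either a genuine crown-based computation of $d$ for the subgroups of $\Out(S)$ in the borderline families, or the case analysis actually carried out by Dalla Volta and Lucchini -- which is precisely what the paper invokes instead of proving the lemma directly.
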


The next lemma follows from \cite[Theorem~2]{95VolLuc}.

\begin{lemma}\label{normsimp}
If $G$ is a group containing a nonabelian simple group $S$ as a normal subgroup such that $G/S$ is solvable, then $d(G)=\max\{2,d(G/S)\}$.
\end{lemma}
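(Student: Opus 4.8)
The plan is to establish the two inequalities $d(G)\ge\max\{2,d(G/S)\}$ and $d(G)\le\max\{2,d(G/S)\}$ separately. The lower bound is routine: $S$ is nonabelian, so $G$ is noncyclic and $d(G)\ge2$, while every generating set of $G$ projects onto a generating set of $G/S$, giving $d(G)\ge d(G/S)$. The work therefore lies in producing a generating set of $G$ of size $d:=\max\{2,d(G/S)\}$.

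Before that I would analyse the minimal normal subgroups lying below $S$. Since $S$ is simple and normal, it is itself a minimal normal subgroup of $G$. If $M$ is a minimal normal subgroup with $M\ne S$, then $M\cap S=1$, so $[M,S]\le M\cap S=1$ and $M\le C_G(S)$; as $M\cong MS/S$ embeds into the solvable group $G/S$, it is solvable, hence elementary abelian. In particular $S$ is the unique minimal normal subgroup of $G$ precisely when $C_G(S)=1$, and in that case $G$ is noncyclic with unique minimal normal subgroup $S$, so Theorem~\ref{LM} applies directly to give $d(G)=\max\{2,d(G/S)\}$.

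The substantive case is $C_G(S)\ne1$, where $S$ is not the unique minimal normal subgroup and Theorem~\ref{LM} no longer applies. Here I would lift generators: pick $g_1,\dots,g_d\in G$ whose images generate $G/S$, so that $H:=\langle g_1,\dots,g_d\rangle$ satisfies $HS=G$, and then seek $s_1,\dots,s_d\in S$ with $\langle g_1s_1,\dots,g_ds_d\rangle\cap S=S$, which forces $\langle g_1s_1,\dots,g_ds_d\rangle=G$. The obstruction $H\cap S$ is normalised by $H$ but not necessarily by all of $G$, so Gasch\"utz's lemma --- the tool for the abelian case, where a chief factor may cost an extra generator --- does not apply. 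The main obstacle is precisely to saturate this intersection by an $S$-adjustment, uniformly over all size-$d$ generating tuples of $G/S$; what makes it possible is that a nonabelian chief factor never forces an extra generator, since $S$ is $2$-generated with enough generating pairs surviving the twist by the $g_i$ that the number of admissible $(s_1,\dots,s_d)\in S^d$ is positive once $d\ge2$. This counting is exactly the content of \cite[Theorem~2]{95VolLuc}, which I would invoke for the normal nonabelian simple section $S$ with solvable quotient $G/S$ to obtain $d(G)\le d$, and hence the claimed equality. I would not attempt to reprove the cohomological estimate behind that theorem; the remaining task is only to check that its hypotheses hold and to assemble the two bounds.
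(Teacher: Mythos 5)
Your proposal is correct and takes essentially the same route as the paper, whose entire proof of Lemma~\ref{normsimp} is the citation of \cite[Theorem~2]{95VolLuc}: your decisive step is that same citation, applied under exactly the lemma's hypotheses (normal nonabelian simple $S$, solvable $G/S$). Your preliminary work --- the routine lower bound and the case $C_G(S)=1$ handled via Theorem~\ref{LM} --- is correct but redundant, since the cited theorem already yields the full equality in all cases.
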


Now our goal is to prove

\begin{theorem}\label{abeliancyclic} A finite group whose abelian subgroups are cyclic is $2$-generated.
\end{theorem}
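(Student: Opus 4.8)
The plan is to prove Theorem~\ref{abeliancyclic} by induction on $|G|$, reducing to a minimal normal subgroup and exploiting the hypothesis that abelian subgroups are cyclic. First I would recall the structural consequence of the hypothesis: a finite group all of whose abelian subgroups are cyclic has cyclic Sylow subgroups for odd primes, and either cyclic or generalized quaternion Sylow $2$-subgroups (this is the classical characterization, since an elementary abelian subgroup of rank $2$ would be noncyclic abelian). Such groups have been studied extensively, and their structure is constrained enough to hope for a low generation number. The key point I would aim to establish is that the hypothesis is inherited by relevant sections, so that the induction can proceed.

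The main engine is Theorem~\ref{LM}. The natural strategy is: if $G$ is cyclic we are done, so assume $G$ is noncyclic. If $G$ has a \emph{unique} minimal normal subgroup $N$, then Theorem~\ref{LM} gives $d(G)=\max\{2,d(G/N)\}$, and I would want to argue $d(G/N)\leq 2$. The difficulty is that the quotient $G/N$ need not inherit the ``abelian subgroups are cyclic'' property, so a naive induction does not close. To handle this I would split according to whether $N$ is abelian or nonabelian. If $N$ is abelian, then by the hypothesis $N$ is cyclic of prime-power order, and uniqueness of the minimal normal subgroup forces $N$ to be the unique minimal subgroup of the Sylow $p$-subgroup; combined with the cyclic/quaternion Sylow structure this pins down $N$ tightly. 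If $N$ is nonabelian, it is a direct product of isomorphic nonabelian simple groups, but the cyclic-abelian-subgroup condition forbids a product of two copies (their diagonal would give noncyclic abelian subgroups), so $N$ is a single nonabelian simple group, and then Lemma~\ref{normsimp} (or Lemma~\ref{almost}) can be invoked to control $d(G)$ directly, provided $G/N$ is solvable.

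The crux is therefore to guarantee the hypotheses needed to apply the stated lemmas, and to show that in every case $d(G/N)\leq 2$. I expect the \textbf{main obstacle} to be the case where $N$ is nonabelian simple but $G/N$ is \emph{not} solvable, since Lemma~\ref{normsimp} requires solvability of the quotient; here one must use the specific list of nonabelian simple groups whose automorphism or extension structure is compatible with the cyclic-abelian-subgroup condition, which severely restricts $N$ (for instance the Suzuki groups and certain small groups are natural candidates), and then verify $2$-generation by appealing to the almost-simple bound in Lemma~\ref{almost}. A secondary obstacle is the reduction to the \emph{unique} minimal normal subgroup situation: if $G$ has several minimal normal subgroups, the cyclic-abelian-subgroup hypothesis should force at most one of them, because two distinct minimal normal subgroups would generate a direct product containing a noncyclic abelian subgroup; making this dichotomy precise, and checking it survives in the abelian and nonabelian cases alike, is the technical heart of the argument. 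Once uniqueness of $N$ is secured and each case yields $d(G/N)\leq 2$, Theorem~\ref{LM} delivers $d(G)=\max\{2,d(G/N)\}=2$, completing the induction.
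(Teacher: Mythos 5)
Your reduction has a genuine gap at the step you call the ``technical heart'': it is \emph{not} true that the hypothesis forces a unique minimal normal subgroup. Two distinct minimal normal subgroups generate a direct product, but if they are cyclic of distinct prime orders $p$ and $q$, that product is cyclic of order $pq$ and contradicts nothing. A concrete counterexample is the dicyclic group $\langle a,b \mid a^6=1,\ b^2=a^3,\ a^b=a^{-1}\rangle$ of order $12$: all its abelian subgroups are cyclic, yet it has two minimal normal subgroups, $\langle a^2\rangle\simeq C_3$ and the center $\langle a^3\rangle\simeq C_2$. So Theorem~\ref{LM} is not directly available, and your induction cannot even get started for a large family of $Z$-groups. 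Moreover, even where $N$ is unique, you never actually close the loop: you concede that the hypothesis does not pass to quotients (correctly --- e.g.\ a generalized quaternion group modulo its center contains a Klein four group), and ``pinning down $N$ tightly'' in the abelian case yields no bound on $d(G/N)$, which is exactly what Theorem~\ref{LM} requires. Incidentally, the case you flag as the main obstacle is vacuous: no nonabelian simple group satisfies the hypothesis, since cyclic Sylow $2$-subgroups force $2$-nilpotence (Burnside) and generalized quaternion Sylow $2$-subgroups preclude simplicity (Brauer--Suzuki), so a nonabelian simple minimal normal subgroup cannot occur at all.

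The paper proceeds along an entirely different axis, splitting by solvability rather than by minimal normal subgroups. For solvable $G$ it follows Gruenberg's argument (Lemma~\ref{solvable}): the structure of $2$-groups with a cyclic subgroup of small index (Burnside's classification) rules out the problematic configurations around a minimal normal subgroup of order $4$. For nonsolvable $G$ it invokes Suzuki's Theorem~E, which gives a subgroup $G_0$ of index at most $2$ that is a direct product $\SL_2(p)\times Z$ with $Z$ a $Z$-group of coprime order; explicit pairs of generators are then exhibited, using coprimality of orders to fuse a generator of each factor into a single element (and, in the index-$2$ case, using the outer element $t$ together with $u^t$ to recover $\SL_2(p)$). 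Some such classification input and explicit construction appears unavoidable, precisely because the class is not closed under quotients and your inductive scheme has no mechanism to control $d(G/N)$.
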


Since a Frobenius complement does not contain noncyclic abelian subgroups (see, e.g., \cite[Theorem~18.1]{Pass-book}), the theorem has the following direct consequence.

\begin{cor}\label{Frob}
A finite Frobenius complement is $2$-generated.
\end{cor}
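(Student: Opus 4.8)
The real content is Theorem~\ref{abeliancyclic}; Corollary~\ref{Frob} is then immediate, since a Frobenius complement has all abelian subgroups cyclic. My plan is to induct on $|G|$ with Theorem~\ref{LM} and Lemma~\ref{normsimp} as the main engines. I would first reformulate the hypothesis: $G$ has all abelian subgroups cyclic iff it contains no $C_p\times C_p$, which — by the classical classification of $p$-groups with a unique subgroup of order $p$ — means every Sylow subgroup of $G$ is cyclic, except that the Sylow $2$-subgroup may be generalized quaternion. A first consequence I would record is that every minimal normal subgroup of $G$ is cyclic of prime order: such a subgroup is a direct product of isomorphic simple groups; a nonabelian simple factor would contain $C_2\times C_2$ (its Sylow $2$-subgroup is neither cyclic, by Burnside's normal $p$-complement theorem, nor generalized quaternion, by the Brauer--Suzuki theorem), while an elementary abelian minimal normal subgroup, being abelian, must itself be cyclic. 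Hence $\Soc(G)$ is a product of groups $C_{p_i}$ with the $p_i$ pairwise distinct, so it is cyclic.

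Now take a counterexample $G$ of least order, so $G$ is noncyclic with $d(G)\geq 3$. I would reduce to $G$ having a unique minimal normal subgroup. The key local observation is that passing to $G/N$ for a minimal normal $N\cong C_p$ destroys the hypothesis only at the prime $p$, and for odd $p$ not even there: a would-be $C_p\times C_p$ in $G/N$ lifts to a subgroup of order $p^3$ and rank $1$, hence cyclic when $p$ is odd, a contradiction. Thus if $G$ had two minimal normal subgroups — necessarily $C_p$ and $C_q$ with distinct primes, at most one even — I could quotient by one of odd order to get a smaller group still satisfying the hypothesis, hence $2$-generated by minimality; because the minimal normal subgroups have pairwise coprime orders, a Gaschütz-type lifting of a generating pair across these (pairwise non-isomorphic) chief factors should yield $d(G)\leq 2$. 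So $G$ has a unique minimal normal subgroup $N\cong C_p$, and Theorem~\ref{LM} gives $d(G)=\max\{2,d(G/N)\}$, forcing $d(G/N)\geq 3$. If $p$ were odd, $G/N$ would again satisfy the hypothesis and be smaller, so $d(G/N)\leq 2$ by minimality — a contradiction; hence $p=2$.

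With $N=\langle z\rangle\cong C_2$ the unique minimal normal subgroup, the odd core $O(G)$ is trivial and $z$ is central. Then $G$ is not a $2$-group (those are cyclic or generalized quaternion, so $2$-generated), nor solvable (Gruenberg's theorem would give $d(G)\leq 2$); being nonsolvable, $F^*(G)$ has a component. Since the only quasisimple groups meeting the Sylow restriction are the $\SL_2(p_0)$ with $p_0\geq 5$ prime, and two components would force a $C_2\times C_2$ or (via their order-$3$ elements) a $C_3\times C_3$, there is exactly one component $L\cong\SL_2(p_0)$. The Sylow $2$-subgroup being generalized quaternion, $G$ has a single involution, namely $z$; a short computation (using $O(G)=1$, the uniqueness of $z$, and that any common prime of $C_G(L)$ and $L$ would create a $C_r\times C_r$) then collapses $C_G(L)$ to $Z(L)=\langle z\rangle$, whence $G/L\hookrightarrow\Out(\SL_2(p_0))=C_2$.

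Finally I would apply Lemma~\ref{normsimp} to the quotient $G/N$: there $L/N\cong\PSL_2(p_0)$ is a nonabelian simple normal subgroup whose quotient $G/L$ is solvable (of order at most $2$), so $d(G/N)=\max\{2,d(G/L)\}=2$, contradicting $d(G/N)\geq 3$. This rules out a counterexample and proves the theorem. The step I expect to be the main obstacle is the reduction to a unique minimal normal subgroup, and more broadly the fact that the hypothesis is not inherited by quotients (already $Q_8\twoheadrightarrow C_2\times C_2$): the whole argument depends on pinning down that this failure happens only at $p=2$ and then on exploiting the rigid structure it forces ($O(G)=1$, quaternion Sylow $2$-subgroup, a unique involution, and $C_G(L)=Z(L)$) to make Lemma~\ref{normsimp} applicable.
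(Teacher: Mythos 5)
Your derivation of the corollary itself coincides with the paper's: both reduce it to Theorem~\ref{abeliancyclic} via the standard fact that a Frobenius complement has no noncyclic abelian subgroup \cite[Theorem~18.1]{Pass-book}. But your proof of Theorem~\ref{abeliancyclic} takes a genuinely different route on the nonsolvable half. The paper simply quotes Suzuki's Theorem~E \cite{Suz} in full strength --- $G$ contains $G_0=L\times Z$ of index at most $2$ with $L=\SL_2(p)$ and $Z$ a $Z$-group of coprime order --- and then exhibits explicit generating pairs ($au,bv$, respectively $au,bt$, with Lemma~\ref{zgroups} handling the $Z$-factor); no induction or reduction theorems appear. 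You instead run a minimal counterexample through Theorem~\ref{LM} and Lemma~\ref{normsimp}: reduce to a unique minimal normal subgroup, force it to be $C_2$ (your observation that the hypothesis passes to $G/N$ for odd $p$ via the rank-one lifting of a would-be $C_p\times C_p$ is correct and is the nice new idea here), dispose of the solvable case by Gruenberg \cite[Lemma~9.9]{KWG} exactly as the paper does (cf.\ Lemma~\ref{solvable}), pin down a single component $L$ with $C_G(L)=Z(L)$ and $G/L\le C_2$, and close by applying Lemma~\ref{normsimp} to $G/\langle z\rangle$ --- thereby avoiding explicit generators altogether. Two caveats on what this buys. First, your step ``the only quasisimple groups meeting the Sylow restriction are the $\SL_2(p_0)$'' is not cheaper than the paper's citation: since $L/\langle z\rangle$ is simple with dihedral Sylow $2$-subgroups, you are tacitly invoking the Gorenstein--Walter classification (plus Schur multiplier facts, with cyclic odd Sylows forcing $q$ prime and excluding $2.\Alt(7)$), i.e., machinery of the same depth as \cite[Theorem~E]{Suz}, part of which you are in effect re-deriving. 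Second, your hedged ``should yield $d(G)\le2$'' lifting step is in fact sound but needs the argument spelled out: for $N\cong C_p$ minimal normal with $p$ odd, either $N\le\Phi(G)$ and $d(G)=d(G/N)$, or $N$ is complemented --- in which case a cyclic Sylow $p$-subgroup of $G$ cannot split over its unique subgroup of order $p$, so $p\nmid|G/N|$, coprimality kills $H^1$, and of the $p^2$ lifts of a generating pair of $G/N$ only the $p$ lying in complements fail to generate. Likewise, collapsing $C_G(L)$ to $\langle z\rangle$ requires slightly more than the common-prime trick: odd primes $r\nmid|L|$ are excluded because $O(C_G(L))\le O(G)=1$ forces odd-order elements to act nontrivially on a cyclic or generalized quaternion $O_2(C_G(L))$, which is possible only for $r=3$ acting on $Q_8$, and $3$ always divides $|\SL_2(p_0)|$; an element $c$ of order $4$ in $C_G(L)$ is killed by taking $i\in L$ with $i^2=z$, since then $ci$ is a second involution. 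With these details supplied, your argument is a complete and valid alternative proof, more structural but not lighter in its classification inputs than the paper's.
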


It is easily seen that every abelian subgroup of a finite group $G$ is cyclic if and only if Sylow $p$-subgroups of $G$ are cyclic for odd $p$ and cyclic or generalized quaternion for $p=2$. Finite groups whose all Sylow subgroups are cyclic were completely described by Zassenhaus in \cite{Zas} and are now called {\em $Z$-groups}.

\begin{lemma}\label{zgroups} Let $G$ be a $Z$-group. Then $G$ is generated by elements $a,b$ with $a^m=b^n=1$, $a^b=a^r$, where $(r-1,m)=(m,n)=1$ and $r^n\equiv1\pmod{m}$.
\end{lemma}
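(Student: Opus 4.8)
The plan is to recover the classical metacyclic structure of a $Z$-group and then package it into the stated presentation. As a first step I would use the classical fact that a $Z$-group $G$ is a semidirect product $\langle x\rangle\rtimes\langle y\rangle$ of two cyclic groups whose orders are coprime, with $\langle x\rangle$ normal in $G$. This follows by induction on $|G|$: if $p$ is the smallest prime dividing $|G|$ and $P$ is a (cyclic) Sylow $p$-subgroup, then $N_G(P)/C_G(P)$ embeds in $\Aut(P)$ and, since $P\le C_G(P)$, its order divides $p-1$; as every prime divisor of $|G|$ is at least $p$, this order is $1$, so $P$ lies in the centre of its normaliser and Burnside's normal $p$-complement theorem yields a normal complement, again a $Z$-group of smaller order, to which the induction applies.

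Granting this, I would take the derived subgroup $A=G'$ as the intended cyclic normal factor. Since $G/\langle x\rangle\cong\langle y\rangle$ is abelian, $G'$ lies in the cyclic group $\langle x\rangle$ and is therefore cyclic; and $G/G'$, being an abelian $Z$-group, is cyclic as well. The only substantial points are thus the coprimality $\gcd(|G'|,|G/G'|)=1$ and the existence of a cyclic complement, and I claim both follow from the way $\langle y\rangle$ acts on $\langle x\rangle$. Working Sylow by Sylow in $\langle x\rangle$: for a prime $q\mid|x|$ the Sylow $q$-subgroup $Q$ is cyclic and $y$ acts on it as a $q'$-automorphism (because $\gcd(|x|,|y|)=1$). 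If this action is nontrivial modulo $q$, then the power $x\mapsto x^s$ inducing it satisfies $\gcd(s-1,|Q|)=1$ and so is fixed-point-free on $Q$; if it is trivial modulo $q$, then the automorphism lies in the Sylow $q$-subgroup of $\Aut(Q)$, hence is a $q$-element, and being simultaneously a $q'$-element it is trivial on all of $Q$. Consequently on each Sylow subgroup of $\langle x\rangle$ the action of $y$ is either fixed-point-free or trivial, which splits $\langle x\rangle=G'\times F$ with $F=C_{\langle x\rangle}(y)$ central in $G$ and $\gcd(|G'|,|F|)=1$.

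With this decomposition the packaging is immediate. The subgroup $B=F\langle y\rangle$ is cyclic (as $F$ is central and $\gcd(|F|,|y|)=1$) and, since $G'\cap B=1$ and $|G'|\,|B|=|G|$, it is a complement to $G'$; in particular $\gcd(|G'|,|G/G'|)=\gcd(|G'|,|B|)=1$. Taking $a$ to be a generator of $G'$ and $b$ a generator of $B$, and setting $m=|a|$, $n=|b|$, we obtain $a^m=b^n=1$, $\gcd(m,n)=1$, and $a^b=a^r$ for some $r$ (as $G'$ is cyclic and normal); the relation $b^n=1$ gives $r^n\equiv1\pmod m$, and since $G'=\langle[a,b]\rangle=\langle a^{r-1}\rangle$ must equal $\langle a\rangle$ we get $\gcd(r-1,m)=1$. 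The step I expect to be the genuine obstacle is the coprimality $\gcd(|G'|,|G/G'|)=1$ of the previous paragraph: this is the one place where the $Z$-group hypothesis is really used, through the ``all-or-nothing'' behaviour of a coprime automorphism on each cyclic Sylow subgroup, whereas the metacyclic reduction is standard and the remaining relations are bookkeeping.
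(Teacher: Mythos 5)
Your second and third paragraphs are sound, and they do real work: granted a factorization $G=\langle x\rangle\rtimes\langle y\rangle$ with cyclic factors of coprime orders, the Sylow-by-Sylow ``all-or-nothing'' analysis of the action of $y$, the splitting $\langle x\rangle=G'\times F$ with $F$ central, and the passage to $a,b$ with $\gcd(r-1,m)=1$ are all correct (for the record, the paper offers no argument of its own here --- it cites Passman's book, Lemma~12.11 --- so any complete proof would count as new content). The problem is your first paragraph, and it is a genuine gap, not a routine omission. The induction you sketch does not close: Burnside's normal $p$-complement theorem gives $G=N\rtimes P$ with $N$ a $Z$-group of smaller order, and the inductive hypothesis gives $N=\langle x\rangle\rtimes\langle y\rangle$ with coprime cyclic factors --- but nothing converts this into the same statement for $G$. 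The subgroup $\langle x\rangle$ produced for $N$ need not be normal (or characteristic) in $G$; the sets $P$ and $\langle y\rangle$ need not generate a subgroup, let alone a cyclic one; and there is no a priori normal cyclic Hall subgroup of $G$ to serve as the new $\langle x\rangle$. The only property your induction honestly propagates is solvability, which is extension-closed; the coprime cyclic-by-cyclic splitting is not, and recombining $P$ with the metacyclic $N$ is precisely the substantive content of the Zassenhaus theorem you are proving. Note also that your self-assessment is inverted as a consequence: the coprimality $\gcd(|G'|,|G/G'|)=1$ in paragraph two comes out easily only because it is inherited from the coprimality $\gcd(|x|,|y|)=1$ that you smuggled into the unproved Claim~A.

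The standard repair shows what is actually missing. From solvability (which your Burnside induction does give), the Fitting subgroup $F(G)$ is nilpotent with cyclic Sylow subgroups, hence cyclic; since $G$ is solvable and $F(G)$ is abelian, $C_G(F(G))=F(G)$, so $G/F(G)$ embeds in the abelian group $\Aut(F(G))$, whence $G'\leq F(G)$ is cyclic and $G/G'$, being an abelian $Z$-group, is cyclic. The coprimality then needs a transfer-type ingredient you never invoke: for a cyclic Sylow $q$-subgroup $Q$, the focal subgroup theorem gives $Q\cap G'=[Q,\,N_G(Q)]$, and applying your own all-or-nothing observation to the coprime action of $N_G(Q)/C_G(Q)$ on the cyclic group $Q$ forces $Q\cap G'\in\{1,Q\}$; hence no prime divides both $|G'|$ and $|G/G'|$, and Schur--Zassenhaus supplies the (necessarily cyclic) complement, which is your Claim~A with $\langle x\rangle=G'$. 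So the skeleton of your proof is right, but as written it assumes, in Claim~A, essentially everything except the normalization $\gcd(r-1,m)=1$ that your last two paragraphs then correctly extract.
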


\begin{proof} See, e.g., \cite[Lemma~12.11]{Pass-book}.
\end{proof}

Gruenberg proved \cite[Lemma~9.9]{KWG} that a finite solvable group $G$ is $2$-generated if all odd Sylow subgroups of $G$ are cyclic and its Sylow $2$-subgroups are cyclic or dihedral or generalized quaternion. In the next lemma, we prove an even more general fact.

\begin{lemma}\label{solvable} Let $G$ be a solvable group whose odd Sylow subgroups are cyclic, and a Sylow $2$-subgroup includes a cyclic subgroup of index at most~$2$. Then $G$ is $2$-generated.
\end{lemma}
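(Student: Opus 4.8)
The plan is to induct on $|G|$, using Theorem~\ref{LM} to peel off a minimal normal subgroup and Lemma~\ref{zgroups} to dispose of the case of a cyclic Sylow $2$-subgroup. If $G$ is cyclic we are done. If the Sylow $2$-subgroup of $G$ is cyclic, then every Sylow subgroup of $G$ is cyclic, so $G$ is a $Z$-group and Lemma~\ref{zgroups} gives $d(G)\le2$. Hence I may assume the Sylow $2$-subgroup $P$ is noncyclic but contains a cyclic subgroup $C$ of index $2$; thus $P$ is dihedral, generalized quaternion, semidihedral, modular, or $C_{2^k}\times C_2$. The first key point is that the hypotheses pass to quotients: for any $N\trianglelefteq G$ the odd Sylow subgroups of $G/N$ are cyclic, and if $C$ is cyclic of index at most $2$ in $P$ then $CN/N$ is cyclic of index at most $2$ in $PN/N$. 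So every proper quotient of $G$ again satisfies the hypotheses, and by the inductive assumption is $2$-generated.

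Next I pass to the socle. Since $d(G)=d(G/\Phi(G))$ and $G/\Phi(G)$ still satisfies the hypotheses, I may assume $\Phi(G)=1$, so that for solvable $G$ we have $\Soc(G)=F(G)=C_G(F(G))$, a completely reducible $G$-module equal to the product of the minimal normal subgroups. Here the hypotheses bound the chief factors sharply. For an odd prime $p$ the $p$-part $\Soc(G)\cap O_p(G)$ is elementary abelian and lies in a cyclic Sylow $p$-subgroup, hence has order at most $p$. For $p=2$ it is an elementary abelian subgroup of $P$, and since $P$ has a cyclic subgroup of index at most $2$ its elementary abelian subgroups have rank at most $2$. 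Over $\mathbb{F}_2$ every one-dimensional module is trivial, so the only way an irreducible $G$-module can occur in $\Soc(G)$ with multiplicity exceeding $1$ is the trivial module $\mathbb{F}_2$ occurring twice, that is, a central $C_2\times C_2$; this forces $P$ to be abelian of type $C_{2^k}\times C_2$. In every other configuration each homogeneous component of $\Soc(G)$ has multiplicity $1$, and if moreover there is a unique minimal normal subgroup, Theorem~\ref{LM} gives $d(G)=\max\{2,d(G/\Soc(G))\}=2$.

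The main obstacle is therefore the case of several minimal normal subgroups, and within it the genuinely delicate sub-case of a central $C_2\times C_2$ (the trivial $\mathbb{F}_2$-module with multiplicity $2$), where Theorem~\ref{LM} does not apply and a naive induction could a priori cost a third generator. I would settle it by a Gaschütz-type generation argument on the complemented socle: realizing $G$ as a subdirect product of its quotients by the individual minimal normal subgroups, the multiplicity-$1$ homogeneous components never raise the number of generators above $\max\{2,d(G/\Soc(G))\}$, since distinct primes give non-isomorphic modules and each surviving module occurs once. For the central $C_2\times C_2$ one checks that the abelian Sylow $2$-subgroup $C_{2^k}\times C_2$ supplies exactly the $2$-rank in $G^{\mathrm{ab}}$ needed to absorb both trivial factors using only two generators.

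Carrying out this final compatibility check—confirming that the required rank $2$ is actually attained in $G^{\mathrm{ab}}$ so that no extra generator is forced by the two central involutions—is the heart of the proof; the rest is bookkeeping. Once it is in place, the induction closes all cases and yields $d(G)\le2$.
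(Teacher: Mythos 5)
You stop exactly where the proof has to begin. The reduction to $\Phi(G)=1$ and the rank bounds on the socle are fine, but the case of several minimal normal subgroups is settled only by appeal to an unnamed ``Gasch\"utz-type generation argument'' and a ``compatibility check'' that you yourself call the heart of the proof and never carry out. Worse, the claim as you state it aims at the wrong invariant: what governs $d(G)$ is not the multiplicity of an irreducible module $V$ in $\Soc(G)$ but the number $m_V$ of \emph{complemented chief factors} $G$-isomorphic to $V$ in an entire chief series (Gasch\"utz's formula for solvable groups, equivalently the theory of crowns). A module occurring once in the socle can recur as a complemented factor higher up, and nothing in your text excludes this. It can be excluded: two complemented copies of $V$ produce a quotient whose socle contains $V\times V$, an elementary abelian $p$-section of rank $2\dim V$, which is impossible here because every $p$-section of $G$ is cyclic for odd $p$ and of rank at most $2$ for $p=2$ (leaving only the trivial $\mathbf{F}_2$-module with $m_V\leq 2$, which costs exactly two generators). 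But this section-rank argument on crown quotients, which is what actually closes the case, appears nowhere in your sketch; and the subdirect-product framing proves nothing by itself, since a subdirect product of $2$-generated groups need not be $2$-generated --- lifting generators past several minimal normal subgroups is precisely the difficulty, not a bookkeeping step.

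Your ``delicate'' central case also degenerates under your own reduction: with $\Phi(G)=1$ a central $C_2\times C_2$ is complemented and central, hence a direct factor, and the rank-$2$ bound on the Sylow $2$-subgroup then forces $P\simeq C_2\times C_2$ (so $k=1$, not general $k$) and $G=C_2\times C_2\times H$ with $H$ an odd-order $Z$-group; taking $xa$ and $yb$ with $a,b$ generating $H$ and extracting odd powers recovers $x$ and $y$, so $d(G)=2$ by a one-line check rather than a rank computation in $G^{\mathrm{ab}}$. For comparison, the paper's proof avoids this machinery entirely: it reruns Gruenberg's induction from \cite[Lemma~9.9]{KWG}, which reduces everything to a minimal normal Klein four subgroup $A$ properly contained in a Sylow $2$-subgroup $T$ splitting over $A$, and then uses Burnside's classification to observe that the three Sylow types not already treated by Gruenberg ($C_{2^{n-1}}\times C_2$, $M_{2^n}$, and semidihedral) admit no such configuration --- semidihedral groups have no normal Klein four subgroup, and the other two do not split over theirs. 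Your route through crowns is more conceptual and could be completed, but to count as a proof it must state the precise generation criterion being invoked and run the multiplicity bound over the whole chief series, not just the socle.
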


\begin{proof} We follow the line of the proof of \cite[Lemma~9.9]{KWG}. In view of this proof (and using the same notation), we may assume that a minimal normal subgroup $A$ of $G$ is an elementary abelian group of order $4$, a Sylow $2$-subgroup $T$ of $G$ includes $A$ as a proper subgroup and splits over it. The cases when $T$ is cyclic, quaternion or dihedral were considered in \cite[Lemma~9.9]{KWG}. Therefore, according to well-known Burnside's classification (see, e.g., \cite[Lemma~6.1.1]{Wolf} or \cite[Theorem~5.4.4]{Gor}), we may suppose that $T$ is either abelian of type $C_{2^{n-1}}\times C_2$, or the group $M_{2^n}$, or semidihedral group  $S_{2^n}$ (see, e.g., \cite[Section~5.4]{Gor} for definitions and elementary properties of these groups). However, in contrast to our assumptions, the latter group does not contain a normal elementary abelian group of order $4$, while the former two groups do not split over such a normal subgroup.
\end{proof}

A finite nonsolvable group whose abelian subgroups are cyclic were described by Suzuki \cite[Theorem~E]{Suz}. This allows us to prove easily the following lemma, thus completing the proof of Theorem~\ref{abeliancyclic}.

\begin{lemma}\label{nonsolvable}
A finite nonsolvable group whose abelian subgroups are cyclic is $2$-generated.
\end{lemma}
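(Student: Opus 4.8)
The plan is to feed the explicit structure supplied by Suzuki's Theorem~E into the reduction tools of this section. First I would record a fact used throughout: no nonabelian simple group has all its abelian subgroups cyclic. Indeed, by the Feit--Thompson theorem such a group has even order, its Sylow $2$-subgroup cannot be cyclic (else the group would be $2$-nilpotent, hence solvable) and cannot be generalized quaternion (by the Brauer--Suzuki theorem no simple group has such a Sylow $2$-subgroup), so it contains a Klein four subgroup. Consequently every minimal normal subgroup of $G$ is cyclic of prime order, and, combined with Suzuki's description, $G$ has a single quasisimple component $L\cong\SL_2(p)$ with $p\geq5$ prime (the group $\SL_2(p^f)$ with $f>1$ is excluded, since its Sylow $p$-subgroup is noncyclic), while $G/L$ is solvable and the unique involution $z$, generating $Z(L)$, is central in $G$.

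Next I would pin down $C=C_G(L)$. Since $C$ centralizes $L$, it shares no odd prime with $L$ (a common prime $\ell$ would produce a subgroup $C_\ell\times C_\ell$), and its Sylow $2$-subgroup must be exactly $\langle z\rangle$: a larger cyclic $2$-group would make a Sylow $2$-subgroup of $L\circ C$ a central product $Q\circ C_{2^k}$ with several involutions, contradicting that a Sylow $2$-subgroup of $G$ is generalized quaternion. Hence $C=O\times\langle z\rangle$, where $O=O(G)$ is an odd $Z$-group with $(|O|,|L|)=1$, and therefore $LC=L\times O$ is a direct product of groups of coprime order, so that $d(LC)=\max\{d(L),d(O)\}=2$. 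Moreover $G/C$ is almost simple with $\PSL_2(p)\leq G/C\leq\PGL_2(p)=\PGmL_2(p)$.

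If $G=LC$ we are done. Otherwise $[G:LC]=2$, and I would pass to $\bar G=G/O$. Here $O(\bar G)=1$, so every minimal normal subgroup of $\bar G$ has even order and equals $\langle\bar z\rangle$; thus $\langle\bar z\rangle$ is the unique minimal normal subgroup of $\bar G$, and Theorem~\ref{LM} gives $d(\bar G)=\max\{2,d(\bar G/\langle\bar z\rangle)\}$. The quotient $\bar G/\langle\bar z\rangle$ is $G/C$, an almost simple group with socle $\PSL_2(p)$ contained in $\PGmL_2(p)$; since the dimension $2$ is even, this containment is exactly what is needed to avoid case~(ii) of Lemma~\ref{almost}, while case~(i) does not apply, so $d(\bar G/\langle\bar z\rangle)\leq2$ and hence $d(\bar G)=2$.

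It remains to lift $2$-generation from $\bar G=G/O$ to $G$ across the odd core, and this is the step I expect to be the main obstacle. By coprimality $(|O|,|G/O|)=1$, so Schur--Zassenhaus writes $G=O\rtimes K$ with $K\cong\bar G$ and $d(K)=2$, where we may take $L\leq K$; since $L$ centralizes $O$, the group $K$ acts on $O$ only through $K/L$, of order at most $2$. I would finish by a Gaschütz-type lifting, inducting on $|O|$ and peeling off odd minimal normal subgroups $M\leq O$: for each such complemented chief factor the action is either trivial or through an involution, and the hypothesis that all abelian subgroups of $G$ are cyclic forbids the repeated isomorphic complemented factors that alone could force a third generator. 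Controlling this cohomology --- showing that an odd coprime core carried by a nonsplit $\PGL_2(p)$-type extension never raises $d$ above the value $2$ already attained modulo $O$ --- is the delicate point; everything else reduces cleanly to the almost simple and solvable cases handled by Lemmas~\ref{almost} and~\ref{solvable} together with Theorem~\ref{LM}.
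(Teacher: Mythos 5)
Your structural analysis is sound as far as it goes --- the facts that no component other than $\SL_2(p)$ can occur, that $C_G(L)=O\times\langle z\rangle$ with $O$ an odd $Z$-group coprime to $|L|$, that $d(LC)=2$ by coprimality, and that Theorem~\ref{LM} plus Lemma~\ref{almost} handle $G/O$ are all correct, and in the case $G=LC$ your argument is complete and essentially the paper's. But in the index-$2$ case you have a genuine gap, and you name it yourself: the lifting of $2$-generation from $G/O$ back to $G$ is never actually carried out. The only cheap general bound across a solvable normal subgroup is $d(G)\leq d(G/O)+1=3$, which is exactly what must be beaten, and your heuristic that the cyclic-abelian-subgroup hypothesis ``forbids the repeated isomorphic complemented factors that alone could force a third generator'' is not a proof --- making it one would require a crown/cohomology analysis that you do not supply. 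So as written the proposal proves $d(G)\leq3$ in the hard case, not $d(G)=2$.

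The paper sidesteps this entirely by using the full strength of Suzuki's Theorem~E, which you invoke only in weakened form. Suzuki gives not just the existence of the index-$2$ subgroup $G_0=L\times Z$, but an explicit outer element $t$ with $t^2=-e\in L$ together with its precise action on standard generators: $a^t=a^{-1}$, $b^t=b$ for the $Z$-group generators $a,b$ of Lemma~\ref{zgroups}, and explicit matrices for $u^t,v^t$, where $u$ is the unipotent generator of order $p$. From this one reads off $L=\langle u,u^t\rangle$ and checks directly that $G=\langle au, bt\rangle$ (coprimality of $|a|$ and $|u|$ recovering $a$ and $u$ from powers of $au$, and $b,t$ from $bt$ since $b^t=b$). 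In other words, the very cohomological obstruction you flag as ``the delicate point'' dissolves once the extension is pinned down concretely rather than treated as an abstract coprime extension; if you want to complete your version, the fix is to quote Suzuki's explicit description of $t$ and exhibit the two generators, not to develop a Gasch\"utz-type lifting.
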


\begin{proof} Let $G$ be a finite nonsolvable group. Then, according to \cite[Theorem~E]{Suz}, the abelian subgroups of $G$ are cyclic if and only if $G$ includes a subgroup $G_0$ of index not exceeding~$2$, which is the direct product of the group $L=\SL_2(p)$, $p>3$ a prime, and a $Z$-group $Z$ of order coprime to the order of~$L$.

Take $Z=\langle a,b\rangle$ as in Lemma~\ref{zgroups} and observe that $L$ is generated by elements
$$
u=\left(\begin{array}{rr}
1 & 1\\
0 & 1
\end{array}\right)
\mbox{ and }
v=\left(\begin{array}{rr}
0 & 1\\
-1 & 0
\end{array}\right)
$$
of orders $p$ and $4$ respectively.

Since $|L|$ and $|Z|$ are coprime, the group $G_0$ is generated by elements $au$ and $bv$. Therefore, if $G=G_0$ we are done.

Let $|G:G_0|=2$. By \cite[Theorem~E]{Suz}, $G$ is generated by $G_0$ and an element $t$ such that $t$ normalizes $Z$ and $L$, $t^2=-e\in L$, where $e$ is the identity matrix, and $t$ acts on elements $a,b,u,v$ as follows:
$$
a^t=a^{-1},\quad b^t=b,\quad
u^t=\left(\begin{array}{rr}
1 & 0\\
-\omega & 1
\end{array}\right),
\mbox{ and }
v^t=\left(\begin{array}{rr}
0 & \omega^{-1}\\
-\omega & 0
\end{array}\right),
$$
where $\omega$ generates the multiplicative group of the underlying field of order~$p$. It is clear that $L=\langle u, u^t\rangle$. Thus, $G=\langle au, bt \rangle$, as required. \end{proof}

We conclude this section by finding the minimal number of generators for the group of two-dimensional monomial matrices with determinant equal to~$\pm1$ over a field having odd order. These groups appeared in Passman's classification of solvable $\12$-transitive linear groups \cite{Pass67,Pass67a,Pass69}. They are obviously intransitive on the set of nonzero vectors and imprimitive as linear groups (see, e.g., \cite{Pass67a}).

\begin{lemma}\label{Passman}
Let $q=p^m$, $p$ an odd prime. Set $G=S_0(q)$ to be the subgroup of $\GL_2(q)$ consisting of the matrices
$$
\left(\begin{array}{cc}
\alpha & 0\\
0 & \pm\alpha^{-1}
\end{array}\right)
\mbox{ and }
\left(\begin{array}{cc}
0 &  \alpha\\
\pm\alpha^{-1} & 0
\end{array}\right),\quad\alpha\in\mathbf{F}^*_q.
$$
Then $d(G)=2$ for $q\equiv3\pmod4$, and $d(G)=3$ for $q\equiv1\pmod4$.
\end{lemma}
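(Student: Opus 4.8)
The plan is to realize $G$ as an explicit extension of a cyclic group, read off its abelianization for the lower bounds, and invoke Lemma~\ref{solvable} for the decisive upper bound when $q\equiv3\pmod4$. Fix a generator $\omega$ of $\mathbf{F}_q^*$ and set
$$
x=\begin{pmatrix}\omega&0\\0&\omega^{-1}\end{pmatrix},\qquad s=\begin{pmatrix}1&0\\0&-1\end{pmatrix},\qquad w=\begin{pmatrix}0&1\\1&0\end{pmatrix}.
$$
Then $x$ has order $q-1$, while $s$ and $w$ are involutions, and $G=\langle x,s,w\rangle$ with $|G|=4(q-1)$; the diagonal matrices of determinant $1$ form the subgroup $\langle x\rangle$, which is cyclic, normal (as $G/\langle x\rangle$ has order $4$), and contains every odd-order Sylow subgroup of $G$. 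The first step is to record the conjugation action, namely $wxw^{-1}=x^{-1}$, $[s,x]=1$, and the crucial relation $wsw^{-1}=-s=s\,x^{(q-1)/2}$, where $x^{(q-1)/2}=-I$ is the unique involution of $\langle x\rangle$. It is exactly the factor $x^{(q-1)/2}$ that produces the dichotomy, since $(q-1)/2$ is odd precisely when $q\equiv3\pmod4$.

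For the lower bounds I would pass to the abelianization. Imposing commutativity turns $wxw^{-1}=x^{-1}$ into $x^2=1$ and $wsw^{-1}=s\,x^{(q-1)/2}$ into $x^{(q-1)/2}=1$, so in $G^{ab}$ the image of $x$ has order dividing $\gcd(2,(q-1)/2)$. When $q\equiv3\pmod4$ this gcd is $1$, the image of $x$ is trivial, and $G^{ab}\cong C_2\times C_2$, whence $d(G)\ge2$. When $q\equiv1\pmod4$ the gcd is $2$, the image of $x$ has order $2$, and an order count gives $G^{ab}\cong C_2\times C_2\times C_2$; being elementary abelian of rank $3$, it satisfies $d(G^{ab})=3$, so $d(G)\ge3$.

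The upper bounds then finish the argument. For $q\equiv1\pmod4$ the bound $d(G)\le3$ is immediate from $G=\langle x,s,w\rangle$, giving $d(G)=3$. For $q\equiv3\pmod4$ I would verify the hypotheses of Lemma~\ref{solvable}: the odd Sylow subgroups are cyclic because they lie in $\langle x\rangle$, and a Sylow $2$-subgroup has order $8$ and is generated by the two involutions $s,w$, whose product $sw$ has order $4$ (indeed $(sw)^2=-I$) and is inverted by $s$; hence this Sylow subgroup is dihedral of order $8$ and so contains a cyclic subgroup of index $2$. Lemma~\ref{solvable} then yields $d(G)\le2$, and since $G$ is nonabelian we conclude $d(G)=2$.

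The main obstacle, and indeed the source of the whole statement, is the correct bookkeeping of the signs: the computation $wsw^{-1}=-s$ feeds the central involution $-I=x^{(q-1)/2}$ into the defining relations, and the parity of $(q-1)/2$ decides both the isomorphism type of $G^{ab}$ and whether the Sylow $2$-subgroup is the dihedral group of order $8$ (the case $q\equiv3$) or a larger $2$-group to which Lemma~\ref{solvable} no longer applies (the case $q\equiv1$, consistent with $d(G)=3$). Making these two computations agree---the rank of $G^{ab}$ on one side and the applicability of Lemma~\ref{solvable} on the other---is the crux, while the remaining verifications are routine matrix arithmetic.
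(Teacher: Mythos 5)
Your proof is correct, and its decisive step takes a genuinely different route from the paper's. The lower bounds agree in substance: for $q\equiv1\pmod4$ the paper passes to the quotient by $K=\langle w^2\rangle$ (your $\langle x^2\rangle$) and shows it is elementary abelian of order $8$, which is precisely your abelianization computation, since $[G,G]=\langle x^2,x^{(q-1)/2}\rangle$ --- indeed $[w,x]=x^2$ and $[w,s]=x^{(q-1)/2}$ lie in $[G,G]$, while $G$ modulo $\langle x^2,x^{(q-1)/2}\rangle$ is visibly abelian; stating it this way would also spare you the tacit assumption that your listed relations form a complete presentation of $G$ (true, because $G=(\langle x\rangle\times\langle s\rangle)\rtimes\langle w\rangle$ with the stated action, but unverified in your text). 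The genuine divergence is the upper bound for $q\equiv3\pmod4$: the paper is constructive, exhibiting the explicit generating pair $u$, $vw$ (your $w$, $sx$) via the identities $(vw)^{(q-1)/2}=-v$ and $(-v)^u=v$, whereas you verify the hypotheses of Lemma~\ref{solvable} --- odd Sylow subgroups contained in the cyclic group $\langle x\rangle$, and Sylow $2$-subgroup $\langle s,w\rangle$ dihedral of order $8$ because $(q-1)_2=2$ --- and quote that structural result; since Lemma~\ref{solvable} is proved earlier in the same section, there is no circularity. The paper's route buys an explicit generating set; yours buys a conceptual explanation of the dichotomy, with the Sylow $2$-structure and the rank of $G^{\mathrm{ab}}$ changing in lockstep as the central involution $-e=x^{(q-1)/2}$ moves in or out of $\langle x^2\rangle$. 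One small repair: your parenthetical justification that $\langle x\rangle$ is normal, ``as $G/\langle x\rangle$ has order $4$,'' is circular (the quotient notation presupposes normality, and index $4$ alone would not imply it anyway); normality is instead immediate from the relations $[s,x]=1$ and $wxw^{-1}=x^{-1}$ that you record in the very next clause.
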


\begin{proof} It is clear that $G$ is generated by the elements
$$
u=\left(\begin{array}{rr}
0 & 1\\
1 & 0
\end{array}\right),\quad
v=\left(\begin{array}{rr}
1 & 0\\
0 & -1
\end{array}\right),
\mbox{ and }
w=\left(\begin{array}{rr}
\omega & 0\\
0 & \omega^{-1}
\end{array}\right),\mbox{ where }\langle\omega\rangle=\mathbf{F}_q^*.
$$
Thus, $d(G)\leq3$. Put $K=\langle w^2\rangle$ and $\ov{G}=G/K$. Let also $\ov u, \ov v, \ov w$ be images of $u,v,w$ in~$\ov G$. Since $[u,w]\in K$, we see that $\overline u$ and $\overline w$ commute.

If $q\equiv1\pmod4$, then $[u,v]=-e\in K$, where $e$ is the identity matrix. It follows that $\ov G=\langle\ov u, \ov v, \ov w \rangle$ is an elementary abelian $2$-group of order~$8$, so $3\geq d(G)\geq d(\ov G)=3$ proving that $d(G)=3$.

Suppose that $q\equiv3\pmod4$. Then $-e\not\in K$, and $\ov G$ is a dihedral group of order $8$ generated by $\ov u$ and $\ov v$. In this case, $(vw)^{(q-1)/2}=-v$ and $(-v)^u=v$, so $G=\langle u, vw\rangle$ is $2$-generated. \end{proof}

\section{Primitive $\32$-transitive permutation groups: General case}\label{s:32proof}

As it was originally proved by Burnside \cite[\S~154, Theorem~XIII]{Burn}, the socle $S$ of a finite\linebreak $2$-transitive group $G$ is either a regular elementary abelian $p$-group, or a nonregular nonabelian simple group. In the former case, the group $G$ is said to be {\em affine} (a point stabilizer $G_0$ of $G$ acts on the socle as a linear group, that is $G_0$ is isomorphic to a subgroup of $\GL_m(p)$), while in the latter case we refer to $G$ as {\em almost simple}, because $S\leq G\leq\Aut(S)$ for some nonabelian simple group~$S$. It turns out that Burnside's result can be generalized to the case of the primitive $\32$-transitive groups.

\begin{lemma}\label{3/2Primitive}\emph{\cite[Theorem~1.1]{Bamb}}
A primitive $\32$-transitive group is either affine or almost simple.
\end{lemma}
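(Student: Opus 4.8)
The plan is to run the argument through the O'Nan--Scott theorem, which partitions the finite primitive permutation groups according to the structure of their socle: the affine type (elementary abelian regular socle), the almost simple type (nonabelian simple socle), and the remaining types, in which the socle is a direct power $T^k$ of a nonabelian simple group $T$ with $k\geq2$, namely the diagonal, product, and twisted wreath types. This is exactly the generalization of Burnside's dichotomy recalled just above the statement, with the three composite-socle types playing the role of the cases to be excluded. Since a $\32$-transitive group is transitive with a point stabilizer acting $\12$-transitively on the remaining points, every nontrivial subdegree of $G$ equals one and the same number $d$; the whole proof consists in showing that this rigidity is incompatible with each of the three types having $k\geq2$.

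For the product type, in which $G$ acts on $\Delta^k$ in product action with a primitive nonregular component group $H\leq\Sym(\Delta)$, I would use that the $G_\alpha$-invariant sets of points at a fixed Hamming distance from $\alpha$ grade the subdegrees. A point differing from $\alpha$ in a single coordinate lies in an orbit whose size is governed by a nontrivial component subdegree $c>1$, which exists because $H$ is primitive and nonregular, whereas a point differing in two coordinates already yields a subdegree that is a product of two such component subdegrees, hence strictly larger. Thus not all nontrivial subdegrees coincide, contradicting $\32$-transitivity; the compound diagonal case reduces to this one.

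For the simple diagonal type, the stabilizer of a point inside the socle $T^k$ is the full diagonal subgroup, and for $k=2$ its orbits on the socle are exactly the conjugacy classes of $T$. Since a nonabelian simple group has nontrivial conjugacy classes of pairwise distinct sizes (for instance $12,15,20$ in $\Alt(5)$), the nontrivial subdegrees are not all equal; for $k\geq3$ one compares orbits supported on one versus two of the $k$ factors to reach the same conclusion.

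The main obstacle is the twisted wreath type, where the socle $N=T^k$ is \emph{regular}, so $G=N\rtimes G_\alpha$ and, identifying $\alpha$ with $1\in N$, the stabilizer $G_\alpha$ acts $\12$-transitively on $N\setminus\{1\}$. Here I would exploit that $G_\alpha$ embeds into $\Aut(T^k)$ and therefore preserves both the support (the set of nonidentity coordinates) and the order of each element. The supports partition $N\setminus\{1\}$ into $G_\alpha$-invariant layers $S_j$ with $|S_j|=\binom{k}{j}(|T|-1)^j$, and the stabilizer of the first coordinate induces on $T\setminus\{1\}$ a group of automorphisms that would itself have to act $\12$-transitively; but an automorphism preserves element orders, and $T$ carries elements of several coprime orders in unequal numbers, so the orbits on these order-layers have incompatible sizes and force the induced group to be trivial. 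Feeding this triviality back into the comparison of the layers $S_1$ and $S_2$ then contradicts the equality of all nontrivial subdegrees, ruling out the twisted wreath type and completing the reduction to the affine and almost simple cases. The delicate point throughout is precisely the twisted base action, which is why the order- and support-invariance of $G_\alpha$ must be used in place of a naive coordinate-permutation argument.
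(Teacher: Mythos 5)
First, note that the paper does not prove this statement at all: Lemma~3.1 is quoted verbatim from \cite[Theorem~1.1]{Bamb}, where its proof is a substantial piece of work. Your O'Nan--Scott skeleton (exclude the product, diagonal and twisted wreath types by the equality of all nontrivial subdegrees) is indeed the right frame and is essentially how the source proceeds, but all of the difficulty of the theorem is concentrated in exactly the three exclusions, and each of your three arguments has a genuine gap. In the simple diagonal case your key factual claim is false: $\Alt(5)$ does \emph{not} have nontrivial classes of pairwise distinct sizes --- its class sizes are $12,12,15,20$ (two classes of $5$-cycles). More importantly, the suborbits of $G_\alpha$ are not single conjugacy classes but unions of classes fused by whatever outer automorphisms are present and by inversion (induced by the factor swap), so exhibiting two classes of different sizes proves nothing; what you actually need is that no group $A$ with $\operatorname{Inn}(T)\leq A$ can act $\12$-transitively on $T\setminus\{1\}$ (allowing inversion), and that is a substantive lemma, not an observation --- it is of the same nature as the $\12$-transitivity classifications in \cite{14LPS} and in practice requires CFSG-level input.

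The other two cases have analogous defects. In the product action case, ``a product of two component subdegrees, hence strictly larger'' is not strict at the boundary: distance-$1$ suborbits of the full wreath stabilizer have size roughly $k\,c$ and distance-$2$ ones roughly $\binom{k}{2}c_1c_2$, and $kc=\binom{k}{2}c^2$ when $k=2$, $c=2$, so component subdegree $2$ must be excluded (true, since the component has nonabelian simple socle, but it needs saying); moreover $G_\alpha$ need not be the full wreath stabilizer, so these sizes are only upper bounds for $G$'s suborbits, and the honest lower bounds must be extracted from $(T_\delta)^k\leq G_\alpha$, bookkeeping you have not carried out. In the twisted wreath case your pivotal step --- that preservation of element orders forces the group induced on a factor to be trivial --- is both unproven (you would need the gcd of the order-layer counts to be $1$ for \emph{every} nonabelian simple $T$; cheap arguments give only that the common orbit length is odd, since the number of involutions is odd) and incompatible with the setting: for a primitive group of twisted wreath type, the coordinate stabilizer in $G_\alpha$ necessarily induces at least $\operatorname{Inn}(T)$ on the factor (this is the standard primitivity criterion for twisted wreath products), so the induced group is never trivial, and the contradiction must instead be wrung from $\operatorname{Inn}(T)\leq A'$ together with equal orbit sizes --- which loops back to the same unproved lemma as in the diagonal case. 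So the proposal is a correct plan of attack but not a proof; the three hand-waved steps are precisely where the cited theorem earns its keep.
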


The desired inequality $d(G)\leq2$ in the case of almost simple groups is a direct consequence of Lemma~\ref{almost} and the known description of almost simple $\32$-transitive groups, see \cite[Section~5]{Cam} or \cite[Table~7.4]{CamBook} for $2$-transitive groups and \cite[Theorem~1.2]{Bamb} otherwise.
\medskip

Thus, we may suppose that $G$ is affine. Then $G$ is permutation isomorphic to the semidirect product of a faithful $\mathbf{F}_p$-module $V$ of dimension $m$, where $n=p^m$ is the degree of $G$,  and the stabilizer $G_0$ of the zero vector in~$V$. Since $G$ is primitive, $G_0$ acts irreducibly on $V$, so $V$ is the unique minimal normal subgroup of $G$ and Theorem~\ref{LM} implies that $d(G)=\max\{2,d(G_0)\}$. Thus it suffices to bound $d(G_0)$. For convenience, we rename $G_0$ by $G$ in this case, so further $G$ is a subgroup of $\GL(V)\simeq\GL_m(p)$. Since the initial permutation group is assumed to be $\32$-transitive, the point stabilizer $G$ is $\12$-transitive on the set $V^*$ of nonzero elements of~$V$.

The classification of the $\12$-transitive linear groups was completed in \cite[Corollary 2]{14LPS} and gives us the following.

\begin{lemma}\label{12tranLin}
If $G\leq \GL(V)=\GL_m(p)$ is a $\frac{1}{2}$-transitive group on $V^*$, then one of the following holds:
\begin{enumerate}
\item $G$ is transitive on $V^*;$
\item $G$ is a Frobenius complement acting semiregularly on $V^*;$
\item $G=S_0(p^{m/2})$ with $p$ odd and $m$ even{\em;}
\item $G\leq \GmL_1(p^m);$
\item $\SL_2(5)\trianglelefteq G\leq \GL_2(p^{m/2})$, where $p^{m/2}=11$, $19$, $29;$
\item $\SL_2(5)\trianglelefteq G\leq \GmL_2(p^{m/2})$, where $p^{m/2}=169;$
\item $G\leq\GL_2(p)$ is solvable and $p=3,5,7,11,17;$
\item $G\leq\GL_4(3).$
\end{enumerate}
\end{lemma}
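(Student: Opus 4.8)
The plan is to stratify according to the way $G$ acts on the set $V^*$ of nonzero vectors, using only the defining hypothesis that all $G$-orbits on $V^*$ share a common length $s$. This splits into three regimes whose difficulty increases sharply, and the honest route is to reduce as much as possible by elementary arguments and then lean on the CFSG-based classification of linear groups for the residue.

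First I would dispose of the transitive case. If $s=|V^*|=p^m-1$, then $G$ is a transitive linear group, and I would simply invoke Hering's classification of such groups (itself resting on CFSG); this is precisely item~(i), so no further work is needed here. Next comes the semiregular case. Suppose $G$ is intransitive but every nonidentity element of $G$ fixes only the zero vector. Then in the affine group $VG$ no nontrivial element of the complement fixes a point of $V^*$, so $VG$ is a Frobenius group with complement $G$; hence $G$ is a Frobenius complement acting semiregularly, which is item~(ii). The only bookkeeping needed is that these two possibilities are the correct dichotomy against the third regime: if an intransitive $\12$-transitive $G$ has trivial point stabilizers then all orbits have length $|G|$ and $G$ is automatically semiregular, so any remaining group must have a nontrivial stabilizer $G_v$ for some $v\in V^*$.

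The remaining regime, where $G$ is intransitive yet some $G_v\neq1$ while all orbits still have the common length $s=|G:G_v|<p^m-1$, is by far the hardest and is the step I expect to be the main obstacle. Here the equal-orbit condition is extremely rigid, since it forces the fixed-point structure of elements to be very tightly controlled across the whole of $V^*$. I would first peel off the explicit imprimitive monomial family by showing that a group of this type which is imprimitive as a linear group is forced into the shape $G=S_0(p^{m/2})$ with $p$ odd and $m$ even (item~(iii)). For $G$ primitive as a linear group, I would then apply Aschbacher-type structure theory together with the common-orbit-length constraint: this pins $G$ down to the short list of exceptional configurations, namely the one-dimensional semilinear groups $G\leq\GmL_1(p^m)$ (item~(iv)), the cases with $\SL_2(5)$ normal over the fields of orders $11,19,29$ and $169$ (items~(v)--(vi)), and the small solvable two- and four-dimensional exceptions over $\mathbf{F}_p$ with $p\in\{3,5,7,11,17\}$ and over $\mathbf{F}_3$ (items~(vii)--(viii)).

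Carrying out this last reduction in full is exactly the content of the deep classification of $\12$-transitive linear groups completed in \cite[Corollary~2]{14LPS}; reproving it would mean redoing the entire exceptional case analysis on top of CFSG. Consequently, rather than reconstruct that argument, the pragmatic proof is to quote it, and my contribution here is only to organize the statement into the transitive/semiregular/residual trichotomy above so that each of the eight items is visibly accounted for.
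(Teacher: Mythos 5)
Your proposal is correct and ultimately coincides with the paper's treatment: the paper does not prove this lemma but simply quotes the classification of $\frac{1}{2}$-transitive linear groups from \cite[Corollary~2]{14LPS}, exactly as you do for the hard residual case. Your extra trichotomy (transitive/semiregular/remainder) is harmless organizational framing, though note that item~(i) needs no appeal to Hering's theorem at all, since the lemma merely records transitivity there.
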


In Case (i) of Lemma~\ref{12tranLin}, the classification of the finite linear groups acting transitively on the set of nonzero vectors is due to C. Hering \cite{Her} (see also \cite[Appendix~1]{84Lieb} and \cite[Ch.~XII, Remark~7.5]{HB3}) and for our purposes can be summarized as follows.

\begin{lemma}\label{tranLin}
If $G\leq \GL(V)=\GL_m(p)$ is transitive on $V^*$, then one of the following holds:
\begin{enumerate}
\item $\SL_a(q)\trianglelefteq G\leq \GmL_a(q)$ with $q^a=p^m;$
\item $\Sp_{2a}(q)\trianglelefteq G\leq \GmL_{2a}(q)$ with $q^{2a}=p^m;$
\item $G_2(q)\trianglelefteq G\leq \GmL_6(q)$ with $q=2^m>2;$
\item $G\leq \GmL_1(p^m);$
\item $\SL_2(5)\trianglelefteq G\leq \GL_2(p^{m/2})$, where $p^{m/2}=11$, $19$, $29$, or $59;$
\item $\Alt(6)\simeq G\leq\GL_4(2)$, $\Alt(7)\simeq G\leq\GL_4(2)$, $\SL_2(13)\simeq G\leq\GL_6(3)$, $\SU_3(3)\simeq G\leq\GL_6(2);$
\item $G\leq\GL_2(p)$ is solvable and $p=3, 5, 7, 11, 23;$
\item $G\leq\GL_4(3).$
\end{enumerate}
\end{lemma}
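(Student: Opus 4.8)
The statement is precisely Hering's classification of transitive linear groups, so the plan is to invoke \cite{Her} and to check that the list given here agrees with the explicit tabulations in \cite[Appendix~1]{84Lieb} and \cite[Ch.~XII, Remark~7.5]{HB3}. Accordingly I would present the proof as a reference to these sources rather than a derivation, since the result rests on the classification of finite simple groups (CFSG); what follows is a sketch of the shape of that argument, offered only to explain why the specific eight families arise.

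Since $G$ is transitive on $V^\ast=V\setminus\{0\}$, the affine semidirect product $VG$ is a $2$-transitive permutation group on $V$ with abelian socle $V$; in particular $G$ is an irreducible subgroup of $\GL_m(p)$ and $p^m-1$ divides $|G|$. The first step is to study the generalized Fitting subgroup $F^\ast(G)$ and, via Clifford theory, to reduce the analysis of imprimitive and tensor-decomposable actions to the primitive case. The principal dichotomy is then between the situation in which $G$ normalizes a classical group acting naturally on $V$ and the situation in which it does not. The former yields the generic families~(i)--(iii), namely $\SL_a(q)\trianglelefteq G\leq\GmL_a(q)$, $\Sp_{2a}(q)\trianglelefteq G\leq\GmL_{2a}(q)$, and $G_2(q)\trianglelefteq G\leq\GmL_6(q)$, the last only in characteristic~$2$.

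When no such classical group is present, one combines Aschbacher's theorem on subgroups of classical groups with the divisibility constraint $p^m-1\mid|G|$. This forces either the semilinear family~(iv), where $G\leq\GmL_1(p^m)$; or the solvable families~(vii)--(viii), obtained from Huppert's classification of solvable transitive linear groups, which accounts for the small primes $p=3,5,7,11,23$ in dimension~$2$ and the four-dimensional groups over $\mathbf{F}_3$; or one of the finitely many exceptional configurations~(v) and~(vi), involving $\SL_2(5)$ for $p^{m/2}=11,19,29,59$ and the sporadic embeddings of $\Alt(6)$, $\Alt(7)$, $\SL_2(13)$, and $\SU_3(3)$.

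The hard part, and the reason I would quote rather than reprove the lemma, is exactly the treatment of the almost simple and exceptional cases: one must run through CFSG, determine which simple groups admit an irreducible $\mathbf{F}_p$-module of the appropriate dimension on which a transitive action on nonzero vectors is possible, and eliminate or pin down the finitely many small exceptions by order, index, and representation-degree computations. This is the substance of Hering's theorem together with its later verifications, so I would cite \cite{Her}, \cite{84Lieb}, and \cite{HB3} and proceed.
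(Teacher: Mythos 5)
Your proposal is correct and matches the paper's treatment exactly: the paper also gives no derivation, stating only that the lemma is Hering's classification of transitive linear groups, cited as \cite{Her} with the same supporting tabulations \cite[Appendix~1]{84Lieb} and \cite[Ch.~XII, Remark~7.5]{HB3}. Your additional sketch of the CFSG-based argument is reasonable background but plays no role in either your proof or the paper's, both of which rest entirely on the citation.
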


The purpose of this section is to reduce the proof of Theorem~\ref{main} and Corollaries~\ref{2tran}--\ref{sub2tran} to direct computations in the small groups described in Items~(vii) and~(viii) of Lemmas~\ref{12tranLin} and~\ref{tranLin}.\smallskip

If $G$ is a Frobenius complement as in Item~(ii) of Lemma~\ref{12tranLin}, then we are done by Corollary~\ref{Frob}.\smallskip

Since every subgroup of a metacyclic group is $2$-generated and $\GmL_1(p^m)$ is metacyclic, $d(G)\leq2$ for $G$ in Item~(iv) of both Lemmas~\ref{12tranLin} and~\ref{tranLin}. \smallskip

If $G$ is from Item~(v) of Lemmas~\ref{12tranLin} and~\ref{tranLin}, then $G\leq ZR$, where $R=\SL_2(5)$ and $Z=\mathbf{F}_q^*$, $q=p^{m/2}$, is the multiplicative group of the underlying field (see Remark 2 after Theorem~1 in~ \cite{14LPS}). The generating set $\{u,v\}$ of $R$ can be chosen so that $u$ is of odd order coprime to~$|Z|$. If $a$ is a generator of $Z$, then the elements $au$ and $v$ generate $G$, as required.\smallskip

In Item~(vi) of Lemma~\ref{12tranLin}, $G$ is an extension by a field automorphism $t$ of order $2$ of the group $K=G\cap\GL_2(169)\leq Z_0R$, where $R=\SL_2(5)$ and $Z_0$ is a subgroup of order 28 in the multiplicative group $\mathbf{F}_{169}^*$ (see Remark 3 after Theorem~1 in~\cite{14LPS}). Since the group $\SL_2(13)$ does not include a subgroup isomorphic to $R$ (see, e.g., \cite[Table~8.2]{Bray}), the element $t$ does not centralize $R$. Hence it acts as the outer (diagonal) automorphism on $R$. Taking $u\in R$ as in the proof of Lemma~\ref{nonsolvable}, we have $R=\langle u, u^t\rangle$. If $a$ is a generator of $Z_0$, then $(|a|,|u|)=1$. Thus, $G=\langle au,t\rangle$, as required.\smallskip

In Item~(vi) of Lemma~\ref{tranLin}, $G=[G,G]$ and $G/Z(G)$ is nonabelian simple, so we are done.\smallskip

If $G$ is from Item~(i) of Lemma~\ref{tranLin}, then we apply Lemma~\ref{almost}.\smallskip

Let $R=\Sp_{2a}(q)$ be as in Item~(ii) of Lemma~\ref{tranLin}. Applying \cite[Corollary~2.10.4]{KM}, we see that for the normalizer $N=N_{\GL(V)}(R)$, the quotient $N/R$ is cyclic. Since $N=N_{\GmL(V)}(R)\cap\GL(V)$, it follows that $G/R$ lies in a metacyclic group. Note that the center $Z(G)=Z(R)$ is either trivial or the unique minimal normal subgroup of $G$. By Theorem~\ref{LM}, $d(G)=d(G/Z(G))$. Now Lemma~\ref{normsimp} yields $d(G)=d(G/S)=2$, as required.\smallskip

If $R=G_2(q)$ is from Item~(iii) of Lemma~\ref{tranLin}, then $G$ is a subgroup of an extension of $K=ZR$ by the group $T$ of field automorphisms, where $Z=\mathbf{F}_q^*$. Indeed, $R$ is a maximal subgroup of $\Sp_6(q)$ and the normalizer $N_{\Aut(\Sp_6(q))}(R)=RT$, see \cite[Table~8.29]{Bray}. Thus, $G/R$ is a subgroup of a metacyclic group and we apply Lemma~\ref{normsimp} once again.\smallskip

Minimal generating sets for the group $S_0(q)$, where $q$ is an odd prime power, from Item~(iii) of the Lemma~\ref{12tranLin} are given in Lemma~\ref{Passman}, so the conclusion of Theorem~\ref{main} follows for these groups. Let us prove that if $G=VG_0$, where $G_0=S_0(q)$, is a normal subgroup of a $2$-transitive group $T$, then $T$ is an affine group and the zero vector stabilizer in $T$ is from Items~(vii) and~(viii) of Lemma~\ref{tranLin}. Indeed, $V$ being the socle of $G$ is characteristic in it. Hence $V$ is a normal regular abelian subgroup of $T$, so $T$ is affine. Therefore, the zero vector stabilizer $T_0$ in $T$ is among the groups described in Lemma~\ref{tranLin}. Checking the items of Lemma~\ref{tranLin} one by one, we see that $T_0$ must be one of the groups from Items~(vii) and~(viii), as required.\smallskip

Thus, if a primitive $\32$-transitive permutation group $G$ is not $2$-generated, then either $G$ is described in Item~(i) of Theorem~\ref{main}, or $G$ is one of the groups from Items~(vii) and~(viii) of Lemmas~\ref{12tranLin} and~\ref{tranLin}. The latter is also true for any primitive normal subgroup $G$ of a $2$-transitive group~$T$.

\section{Primitive $\32$-transitive permutation groups: Exceptions}\label{s:32small}

We deal with the groups from Items~(vii) and~(viii) of Lemmas~\ref{12tranLin} and~\ref{tranLin} using $\operatorname{GAP}$~\cite{GAP} and its package $\operatorname{IRREDSOL}$ (see the log-file in~\cite{Logs}). First, we compute the set of all primitive $\32$-transitive solvable groups of degrees $3^2, 5^2, 7^2, 3^4, 11^2, 17^2, 23^2$. Using the function \verb"MinimalGeneratingSet(G)" for solvable groups, we filter out all $2$-generated groups. The resulting set consists of four affine groups $G_i$, $i=1,\ldots,4$, listed in Table~\ref{tab:1}. The table provides the degrees~$n$, ranks~$\operatorname{rk}$, sizes of minimal generating set $d$ of~$G$, the sizes~$|G_0|$ and matrix generators of the zero stabilizers~$G_0$, the parameters $(m,p,k,l)$ for a quick access to the exceptional groups through the $\operatorname{GAP}$ function \verb"PrimitiveSolvablePermGroup(m,p,k,l)". We verified that the groups $G_1$, $G_3$, and $G_4$ are permutation isomorphic to the groups (of corresponding degrees) described in Item~(i) of Theorem~\ref{main}. Thus, only the group $G_2$ must be mentioned separately (see Item~(ii) of Theorem~\ref{main}). It turns out that this group is not $2$-transitive and the zero stabilizer of this group is imprimitive as a linear group too.\smallskip

To complete the proof of Theorem~\ref{main} together with Corollaries~\ref{2tran} and~\ref{2hom}, it remains to consider nonsolvable groups of degree~$3^4$. We use the library of primitive permutation groups (built into GAP) to obtain all primitive $\32$-transitive nonsolvable groups of degree~$3^4$. Next we filter out groups with small generating set size equal to $2$. Mind that the function \verb"SmallGeneratingSet(G)" is apparently randomized, so another user can get another output on this step. In any case, there are only few groups left and they are also $2$-generated which can be checked by the function \verb"Is2Gen(G)" (see~\cite{Logs}).\smallskip

In order to prove Corollary~\ref{sub2tran}, we compute the normalizers of the groups from Table~\ref{tab:1} in the corresponding symmetric groups. It turns out that only the normalizers of $G_1$ and $G_2$ are $2$-transitive groups. Further computations show that if $G\in\{G_1,G_2\}$, then $M=N_{\Sym(n)}(G)=N_{\AGL(V)}(G)$, so $M=V\rtimes M_0$ is again an affine group and $M_0\leq\GL(V)$ is the stabilizer of zero vector  (in fact, this was already proved for $G_1$ in Section~\ref{s:32proof}). The group $G_0$ has $r-1$ nonzero orbits on $V$, where $r=\rk(G)$ ($r-1=3$ for $G=G_1$ and $r-1=5$ for $G=G_2$, see Table~\ref{tab:1}). Hence, if $G\trianglelefteq T\leq M$ and $T$ is $2$-transitive, then $|T:G|$ need to be a multiple of $r-1$. We verify that the latter condition is also sufficient, thus completing the proof. Indeed, our computations show that for $i=1,2$ and $G=G_i$, the group $\langle G_0, g \rangle$ acts transitively on $V^*$ for all $g\in M_0$ of order $r-1$. We collect the results on the maximal $2$-transitive groups $M$ with non-2-generated primitive normal subgroups in Table~\ref{tab:2} providing their degrees,  the sizes and matrix generators of the zero stabilizers $M_0$, and the information which of non-2-generated primitive normal subgroups from Table~\ref{tab:1} lies inside.

\medskip

\begin{table}[h]
\caption{Small primitive $\frac{3}{2}$-transitive groups $G$ with $d(G)>2$}\label{tab:1}
\begin{center}
\begin{tabular}[t]{|p{1em}|p{1.2em}|p{1.2em}|p{1.2em}|p{2em}|p{25em}|p{5em}|}
\hline
$G$&$n$&$\rk$&$d$&$|G_0|$& a minimal generating set of $G_0$&$(m,p,k,l)$ \\
\hline
$G_1$&$5^2$ & $4$&$3$&$16$&
$\tiny\begin{pmatrix} 0 & 4 \\ 4 & 0 \end{pmatrix}, \begin{pmatrix} 2 & 0 \\ 0 & 2 \end{pmatrix}, \begin{pmatrix} 4 & 0 \\ 0 & 1 \end{pmatrix}$&$(2,5,1,2)$ \\
\hline
$G_2$&$3^4$ & $6$&$4$ &$32$&
$\tiny\begin{pmatrix} 0 & 0 & 2 & 0 \\ 0 & 0 & 0 & 2 \\ 2 & 0 & 0 & 0 \\ 0 & 2 & 0 & 0\end{pmatrix}, \begin{pmatrix} 0 & 1 & 0 & 0 \\ 2 & 0 & 0 & 0 \\ 0 & 0 & 0 & 2 \\ 0 & 0 & 1 & 0\end{pmatrix},\begin{pmatrix} 2 & 1 & 0 & 0 \\ 1 & 1 & 0 & 0 \\ 0 & 0 & 2 & 1 \\ 0 & 0 & 1 & 1\end{pmatrix},\begin{pmatrix} 2 & 0 & 0 & 0 \\ 0 & 2 & 0 & 0 \\ 0 & 0 & 1 & 0 \\ 0 & 0 & 0 & 1\end{pmatrix}$&$(4,3,1,33)$ \\
\hline
$G_3$&$3^4$ & $6$&$3$ &$32$&
$\tiny\begin{pmatrix} 0 & 0 & 2 & 2 \\ 0 & 0 & 2 & 1 \\ 1 & 1 & 0 & 0 \\ 1 & 2 & 0 & 0\end{pmatrix}, \begin{pmatrix} 1 & 2 & 0 & 0 \\ 2 & 0 & 0 & 0 \\ 0 & 0 & 0 & 1 \\ 0 & 0 & 1 & 1\end{pmatrix},\begin{pmatrix} 2 & 0 & 0 & 0 \\ 0 & 2 & 0 & 0 \\ 0 & 0 & 1 & 0 \\ 0 & 0 & 0 & 1\end{pmatrix}$&$(4,3,2,6)$ \\
\hline
$G_4$&$17^2$ & $10$&$3$ &$64$&
$\tiny\begin{pmatrix} 0 & 8 \\ 15 & 0 \end{pmatrix}, \begin{pmatrix} 11 & 0 \\ 0 & 3 \end{pmatrix}, \begin{pmatrix} 16 & 0 \\ 0 & 1\end{pmatrix}$&$(2,17,1,15)$ \\
\hline
\end{tabular}
\end{center}
\end{table}

\begin{table}[h]
\caption{Maximal $2$-transitive groups $M$ with non-2-generated primitive normal subgroups}\label{tab:2}
\begin{center}
\begin{tabular}[t]{|p{1.2em}|p{1.2em}|p{2em}|p{15em}|p{4em}|}
\hline
$M$&$n$&$|M_0|$& a minimal generating set of $M_0$ & $G_i\trianglelefteq M$  \\
\hline
$M_1$&$5^2$&$96$&
$\tiny\begin{pmatrix} 0 & 2 \\ 1 & 0 \end{pmatrix}, \begin{pmatrix} 3 & 2 \\ 1 & 1 \end{pmatrix}$ & $G_1$\\
\hline
$M_2$&$3^4$&$3840$&
$\tiny\begin{pmatrix} 2 & 2 & 0 & 2 \\ 0 & 2 & 1 & 1 \\ 2 & 2 & 0 & 1 \\ 0 & 2 & 2 & 2\end{pmatrix}, \begin{pmatrix} 1 & 0 & 0 & 1 \\ 2 & 1 & 2 & 2 \\ 1 & 2 & 2 & 2 \\ 1 & 0 & 0 & 2\end{pmatrix}$ &$G_2$\\
\hline
\end{tabular}
\end{center}
\end{table}


\begin{thebibliography}{99}
\bibitem{AsGur}
{\em M. Aschbacher and R. Guralnick}, Some aplications of the first cohomology group, J. Algebra 90 (1984), 446--460.
\bibitem{Bamb}
{\em J. Bamberg, M. Giudici, M.W. Liebeck, C.E. Praeger and J. Saxl}, The classification of almost simple 3/2-transitive groups, Trans. Amer. Math. Soc. 365 (2013), 4257--4311.
\bibitem{Bray}
{\em J. Bray, D. Holt and C. Roney-Dougal}, The maximal subgroups of the low-dimensional finite classical groups, London Math. Soc. Lect. Note Series 407 (2013).
\bibitem{BKR}
{\em R. M. Bryant, L. G. Kov\'{a}cs, and G. R. Robinson}, Transitive permutation groups and irreducible linear groups, Quart. J. Math. Oxford Ser. (2), 46 (1995), no.~184, 385--407.
\bibitem{Burn}
{\em W. Burnside}, Theory of Groups of Finite Order, 2nd ed., Cambridge Univ. Press, Cambridge (1911).
\bibitem{Cam}
{\em P. J. Cameron}, Finite permutation groups and finite simple groups, Bull. London Math. Soc. (1) 13 (1981), 1--22.
\bibitem{CamBook}
{\em P. J. Cameron}, Permutation groups, LMS Student Texts, 45, Cambridge Univ. Press, Cambridge (1999).
\bibitem{95VolLuc}
{\em F. Dalla Volta, A. Lucchini}, Generation of almost simple groups, J. Algebra 178 (1995), 194--223.
\bibitem{DK}
{\em J. D. Dixon and L. G. Kov\'{a}cs}, Generating finite nilpotent irreducible linear groups, Quart. J. Math. Oxford Ser. (2), 44 (1993), 1--15.
\bibitem{DM}
{\em J. D. Dixon and B. Mortimer}, Permutation groups, Springer-Verlag Press, New York (1996).
\bibitem{GAP}
The GAP Group, GAP -- Groups, Algorithms, and Programming, Version 4.11.1; 2021. (www.gap-system.org).
\bibitem{Gor}
{\em D. Gorenstein}, Finite Groups, Harper \& Row, New York (1968).
\bibitem{KWG}
{\em K. W. Gruenberg}, Relation modules of finite groups, CBMS Regional Conference Series in Mathematics, No. 25, AMS, Providence, R.I. (1976).
\bibitem{Gud}
{\em M. Giudici, M. W. Liebeck, C. E. Praeger, J. Saxl, and P. H. Tiep}, Arithmetic results on orbits of linear groups, Trans. Amer. Math. Soc. 368 (2016), 2415--2467.
\bibitem{Her}
{\em C. Hering}, Transitive linear groups and linear groups which contain irreducible subgroups of prime order, II, J. Algebra 93 (1985) 151--164.
\bibitem{HB3}
{\em B. Huppert and N. Blackburn}, Finite Groups III, Springer-Verlag, Berlin Heidelberg (1982).
\bibitem{69Kan}
{\em W. M. Kantor}, Automorphism groups of designs, Math. Z. 109 (1969), 246--252.
\bibitem{KM}
{\em P. Kleidman and M. Liebeck}, The subgroup structure of the finite classical groups, Cambridge University Press, Cambridge (1990).
\bibitem{KN}
{\em L. G. Kov\'{a}cs and M. F. Newman}, Generating transitive permutation groups, Quart. J. Math. Oxford Ser. (2), 39 (1988), no.~155, 361--372.
\bibitem{84Lieb}
{\em M. W. Liebeck}, The affine permutation groups of rank three, Proc. London Math. Soc. (3) 54 (1987), 477--516.
\bibitem{14LPS}
{\em M. W. Liebeck, C. E. Praeger and J. Saxl}, The classification of $\frac{3}{2}$-transitive permutation groups and $\frac{1}{2}$-transitive linear groups, Proc. Amer. Math. Soc. 147 (2019), 5023--5037.
\bibitem{97LucMen}
{\em A. Lucchini and F. Menegazzo}, Generators for finite groups with a unique minimal normal subgroup, Rend. Sem. Mat. Univ. Padova, 98 (1997), 173--191.
\bibitem{00LMMp}
{\em A. Lucchini, F. Menegazzo and M. Morigi}, Asymptotic results for primitive permutation groups and irreducible linear groups, J. Algebra 223 (2000), 154--170.
\bibitem{00LMMt}
{\em A. Lucchini, F. Menegazzo and M. Morigi}, Asymptotic results for transitive permutation groups, Bull. London Math. Soc., 32 (2000), no.~2, 191--195.
\bibitem{MN}
{\em A. McIver and P. M. Neumann}, Enumerating finite groups, Quart. J. Math. Oxford (2) 38 (1987), 473--488.
\bibitem{Pass-book}
{\em D. S. Passman}, Permutation groups, W.A. Benjamin, Inc., New York (1968).
\bibitem{Pass67}
{\em D. S. Passman}, Solvable half-transitive automorphism groups, J. Algebra 6 (1967), 285--304.
\bibitem{Pass67a}
{\em D. S. Passman}, Solvable $\frac{3}{2}$-transitive permutation groups, J. Algebra 7 (1967), 192--207.
\bibitem{Pass69}
{\em D. S. Passman}, Exceptional $\frac{3}{2}$-transitive permutation groups, Pacific J. Math. 29 (1969), 669--713.
\bibitem{PS}
{\em L. Pyber and A. Shalev}, Asymptotic results for primitive permutation groups, J. Algebra 188 (1997), 103--124.
\bibitem{Suz}
{\em M. Suzuki}, On finite groups with cyclic Sylow subgroups for all odd primes, Amer. J. Math. 77 (1955), 657--691.
\bibitem{Tr}
{\em G. Tracey}, Sharp upper bound on the minimal number of elements required to generate a transitive permutation group (2021), arXiv:2102.10070 [math.GR].
\bibitem{64Wiel}
{\em H. Wielandt}, Finite permutation groups, Academic Press, New York (1964).
\bibitem{69Wiel}
{\em H. Wielandt}, Permutation groups through invariant relations and invariant functions, Lect. Notes Dept. Math. Ohio St. Univ., Columbus (1969).
\bibitem{Wolf}
{\em J. A. Wolf}, Spaces of constant curvature. 6th ed., AMS Chelsea Publishing, Providence, R.I. (2011).
\bibitem{Zas}
{\em H. Zassenhaus}, Uber endliche Fastkorper, Hamburg Abh., 11 (1936) 187--220.
\bibitem{Logs}
http://math.nsc.ru/\string~vasand/Computations/32computations.txt.
\end{thebibliography}
\end{document}